\documentclass[12pt]{scrartcl}
\usepackage[english]{babel}
\usepackage[utf8x]{inputenc}
\usepackage[T1]{fontenc}
\usepackage{lmodern}
\usepackage{amsfonts}
\usepackage{amsmath}
\usepackage{amssymb}
\usepackage{amsthm}
\usepackage{mathrsfs}
\usepackage[colorlinks=false,urlcolor=blue]{hyperref}
\usepackage{listings}
\usepackage{graphicx}
\usepackage[shortlabels]{enumitem}
\usepackage{mathtools}
\usepackage{xcolor}
\usepackage{comment}

\addtokomafont{section}{\rmfamily}
\addtokomafont{subsection}{\rmfamily}
\addtokomafont{subsubsection}{\rmfamily}
\addtokomafont{paragraph}{\rmfamily}
\swapnumbers
\theoremstyle{plain}
\newtheorem{thm}[paragraph]{Theorem}
\newtheorem{lem}[paragraph]{Lemma}
\newtheorem{cor}[paragraph]{Corollary}
\newtheorem{prop}[paragraph]{Proposition}
\theoremstyle{definition}
\newtheorem{defn}[paragraph]{Definition}
\newtheorem{rem}[paragraph]{Remark}
\newtheorem{ex}[paragraph]{Example}
\counterwithin{paragraph}{section}
\counterwithout{paragraph}{subsection}
\counterwithout{paragraph}{subsubsection}
\counterwithin{equation}{section}

\newcommand{\R}{\mathbb{R}}
\newcommand{\C}{\mathbb{C}}
\renewcommand{\H}{\mathbb{H}}
\newcommand{\N}{\mathbb{N}}
\newcommand{\Z}{\mathbb{Z}}
\newcommand{\Oct}{\mathbb{O}}
\renewcommand{\O}{\mathrm{O}}
\renewcommand{\P}{\mathbb{P}}
\newcommand{\Id}{\mathrm{Id}}
\newcommand{\vol}{\mathrm{vol}}
\newcommand{\Sym}{\operatorname{Sym}}
\newcommand{\tr}{\operatorname{tr}}
\newcommand{\im}{\operatorname{im}}
\newcommand{\diag}{\operatorname{diag}}
\newcommand{\End}{\operatorname{End}}
\newcommand{\Ad}{\operatorname{Ad}}
\newcommand{\ad}{\operatorname{ad}}
\newcommand{\U}{\operatorname{U}}
\newcommand{\SU}{\operatorname{SU}}
\newcommand{\su}{\mathfrak{su}}
\renewcommand{\u}{\mathfrak{u}}
\newcommand{\SO}{\operatorname{SO}}
\newcommand{\so}{\mathfrak{so}}
\newcommand{\Sp}{\operatorname{Sp}}
\renewcommand{\sp}{\mathfrak{sp}}
\newcommand{\Spin}{\mathrm{Spin}}
\newcommand{\spin}{\mathfrak{spin}}
\newcommand{\Cl}{\mathrm{C}\ell}
\makeatletter
\@ifundefined{sl}{%
}{
}
\makeatother
\newcommand{\spann}{\operatorname{span}}
\newcommand{\pr}{\operatorname{pr}}
\newcommand{\m}{\mathfrak{m}}
\newcommand{\h}{\mathfrak{h}}
\newcommand{\g}{\mathfrak{g}}
\renewcommand{\k}{\mathfrak{k}}
\renewcommand{\t}{\mathfrak{t}}
\newcommand{\intprod}{\mathbin{\lrcorner}}
\newcommand{\e}{\mathfrak{e}}
\newcommand{\f}{\mathfrak{f}}

\newcommand{\rmE}{\mathrm{E}}

\newcommand{\rmG}{\mathrm{G}}
\newcommand{\Hol}{\mathrm{Hol}}
\newcommand{\hol}{\mathfrak{hol}}
\newcommand{\stab}{\mathfrak{stab}}
\newcommand{\rank}{\operatorname{rk}}
\newcommand{\Curv}{\mathcal{K}}
\newcommand{\threead}{3-$(\alpha,\delta)$-Sasaki}
\newcommand{\W}{\mathcal{W}}
\DeclareMathOperator{\hodge}{\star}

\newcommand{\todo}[1]{\textcolor{red}{#1}}

\title{\rmfamily Geometries with parallel, skew-symmetric and closed torsion}
\author{Andrei Moroianu\footnote{Université Paris-Saclay, CNRS,  Laboratoire de mathématiques d'Orsay, 91405 Orsay, France.}\ \footnote{Institute of Mathematics ``Simion Stoilow'' of the Romanian Academy, 21 Calea Grivitei, 010702 Bucharest, Romania.}, Paul Schwahn\footnote{Universidade Estadual de Campinas, IMECC, Rua Sérgio Buarque de Holanda 651, 13083-859 Campinas-SP, Brazil.}}
\date{}

\begin{document}

\maketitle

\begin{abstract}
\noindent
We study Riemannian manifolds carrying a metric connection with parallel, skew-symmetric and closed torsion, which we call in short PSCT manifolds. We prove that PSCT manifolds always locally split into a product of well-understood factors, allowing a complete local classification. Further, we investigate various $G$-structures of PSCT type, with a focus on almost Hermitian structures and their possible Gray--Hervella classes.

\medskip

\noindent{\textit{Mathematics Subject Classification} (2020): 53B05, 53C10, 53C15, 53C25}

\medskip

\noindent{\textit{Keywords}: $G$-structures, parallel skew-symmetric torsion, closed torsion, almost Hermitian manifolds}
\end{abstract}

\section{Introduction}
\label{sec:intro}

A \emph{geometry with parallel skew-symmetric torsion} $(M,g,\tau)$ is a Riemannian manifold together with a $3$-form $\tau\in\Omega^3(M)$ such that the connection $\nabla^\tau=\nabla^g+\tau$ parallelizes $\tau$, i.e.~$\nabla^\tau\tau=0$. This class of geometries has been widely studied (see e.g. \cite{CS,CMS,MS}), as it encompasses several important geometric situations: naturally reductive spaces, nearly Kähler manifolds in dimension $6$, nearly parallel $\rmG_2$-manifolds, Sasaki and \threead\ manifolds.

In contrast to the torsion-free case, geometries with parallel skew-symmetric torsion $\tau\neq0$ do not necessarily locally split into a product of irreducible manifolds. However, for every structure group $G$ that contains the holonomy group of $\nabla^\tau$ and leaves $\tau$ invariant, there is a canonical way to locally identify $(M,g,\tau)$ with the total space of a Riemannian submersion (which only depends on the structure of $TM$ as a $G$-representation) over a base that splits as a product of geometries with parallel skew-symmetric torsion, and whose fibers are naturally reductive Ambrose--Singer manifolds. This result is due to Cleyton--Moroianu--Semmelmann \cite{CMS} for $G=\Hol(\nabla^\tau)$, and was extended in \cite{MS} to every group $G$ containing $\Hol(\nabla^\tau)$ and stabilizing $\tau$, together with the fact that the individual factors of the base are \emph{stabilizer-irreducible} (i.e.~the stabilizer of the torsion acts irreducibly on the tangent space). These stabilizer-irreducible geometries are either Ambrose--Singer manifolds, nearly Kähler $6$-manifolds, nearly parallel $\rmG_2$-manifolds, or $3$-dimensional \cite[Thm.~4.4]{MS}.

Of separate interest are Riemannian manifolds carrying a metric connection with skew-symmetric and closed torsion, which are also called \emph{closed torsion geometries}. Stecker \cite{Ste} has shown that a remarkably similar local submersion result holds for them. If a manifold carries a $G$-structure parallelized by a metric connection with skew-symmetric and closed torsion, it is sometimes called a \emph{strong $G$-manifold with torsion}. There have ample applications in geometry and string theory; for example, they are sometimes used to construct solutions to the generalized Ricci flow \cite{genRic}. Strong $\U(n)$-manifolds with torsion (\emph{SKT manifolds}) are the setting of the related pluriclosed flow \cite{ST}, and strong $\SU(n)$-manifolds with torsion (\emph{SCYT manifolds}) are exactly the non-Kähler solitons of this flow. Strong $\rmG_2$-manifolds with torsion can be viewed as solutions to a limit of the heterotic $\rmG_2$-system \cite{G2T}.

In this article we consider the intersections of the two classes introduced above:

\begin{defn}
A \emph{PSCT (parallel, skew-symmetric and closed torsion) geometry} is a geometry with parallel skew-symmetric torsion $(M,g,\tau)$ that additionally satisfies $d\tau=0$.
\end{defn}


PSCT geometries were first studied by Agricola--Ferreira--Friedrich \cite{AFF} who proved a splitting theorem using the action of the Lie algebra bundle $\g_\tau\subset\so(TM)$ generated by all $X\intprod\tau$ where $X\in TM$. Moreover they showed that a \emph{Riemannian irreducible} PSCT geometry of dimension at least $5$ has to be a simple Lie group (with bi-invariant metric) or its noncompact symmetric dual. As we will see in \ref{sec:decomp}, it is in fact true that PSCT geometries always locally split into a product of \emph{stabilizer-irreducible} PSCT geometries, and the irreducible pieces are completely understood (see Theorems A and B below).

Note that in the recent preprint \cite{Pap}, Papadopoulos also attempts to classify the complete, simply connected PSCT geometries; however in his article several important cases are overlooked, probably due to a flaw in the proof of \cite[Thm.~1.1]{Pap}, where it is wrongly stated that a Riemannian manifold carrying a parallel Lie algebra structure (i.e.~bracket) on its tangent bundle is locally isometric to a semisimple Lie group. The correct classification statement is given in our  Corollary~\ref{corprod} below.

The case of PSCT geometries parallelizing a Hermitian structure can be equivalently understood as follows: if $(M,g,J,\omega)$ is a Hermitian manifold, then it has a unique metric connection with skew-symmetric torsion, namely the \emph{Bismut connection}, with torsion $2\tau=-d^c\omega$. Its torsion is closed if and only if $dd^c\omega=0$, i.e.~if $(M,g,J,\omega)$ is pluriclosed. Therefore a PSCT geometry $(M,g,\tau)$ parallelizes a Hermitian structure $J$ with fundamental 2-form $\omega$ if and only if $(M,g,J,\omega)$ is pluriclosed and its Bismut connection has parallel torsion. Such Hermitian structures are also called \emph{Bismut Kähler-like}, and they have recently been studied and classified by
Barbaro--Pediconi--Tardini \cite{BPT}. Their examples show that even though a PSCT geometry always locally splits into ``boring'' irreducible PSCT geometries, a geometric structure parallelized by a PSCT connection does not necessarily respect this splitting, giving an interesting angle of study. Very recently, Barbaro--Pediconi \cite{BP} dropped the pluriclosed condition and replaced it by the assumption that the Bismut connection has parallel torsion and curvature. They call these geometries Bismut--Ambrose--Singer, and obtain their classification in the complete, simply connected case. 


For many $G$-structures of interest, a $G$-connection with skew-symmetric torsion $\nabla^\tau$ is unique if it exists (and then called \emph{characteristic connection}). This includes for example almost Hermitian structures, almost contact metric structures, $\rmG_2$-structures \cite{FI}, $\Spin(7)$-structures \cite{Spin7}, and $\Spin(9)$-structures \cite{Spin9}. In fact this is also true for $\Sp(q)\Sp(1)$-structures, as we show in~\ref{sec:unique}. In the above cases, where the $G$-structure determines the torsion form $\tau$, it thus makes sense to talk about \emph{$G$-structures with parallel skew-symmetric (closed) torsion}.

For the reader's convenience we list here the main results of the paper. We refer to the corresponding sections for the notation in the statements below. 

In~\ref{sec:decomp} we complete the proof of the decomposition of PSCT geometries into stabilizer-irreducible PSCT geometries which was first considered in \cite{AFF}:

\textbf{Theorem A.} (Corollary \ref{cor:split} below)
{\em Any PSCT geometry $(M,g,\tau)$ splits locally as a Riemannian product,
\[(M,g,\tau)=\prod_i(M_i,g_i,\tau_i),\qquad \tau=\sum_i\tau_i,\]
into PSCT factors which are $\stab(\tau_i)$-irreducible. If $(M,g)$ is complete and simply connected, the above decomposition holds globally.}

We then describe each stabilizer-irreducible factor using our recent results from \cite{MS}:

\textbf{Theorem B.} (Theorem \ref{thm:irred} below)
{\em Any $\stab(\tau)$-irreducible PSCT geometry with $\tau\neq 0$ is locally isomorphic to one of the following:
\begin{enumerate}[\upshape(i)]
 \item a simple compact Lie group $H$ with bi-invariant metric, where $\tau$ is a nonzero multiple of the canonical $3$-form, and $\hol(\nabla^\tau)$ is either trivial or $\h$,
 \item a flat Lie algebra $\h$, where $\tau$ is a nonzero multiple of the canonical $3$-form, and $\hol(\nabla^\tau)=\h$,
 \item an irreducible noncompact symmetric space $H^\C/H$, where $\tau$ is a nonzero multiple of the canonical $3$-form, and $\hol(\nabla^\tau)=\h$,
 \item a $3$-dimensional Riemannian manifold $(M,g)$, where $\tau$ is a nonzero multiple of $\vol_g$, and $\hol(\nabla^\tau)$ is either trivial, $\u(1)$ or $\so(3)$, according to whether $(M,g)$ is a round $3$-sphere with $\nabla^\tau$ the $(\pm)$-connection, a non-round $\alpha$-Sasaki manifold, or generic.
\end{enumerate}}

In the main part of the article (\ref{sec:almHerm} and \ref{sec:other}) we investigate which $n$-dimensional PSCT geometries parallelize in addition a $G$-structure, where $G\subset\SO(n)$. We do so in the cases where $G$ is an irreducible Riemannian holonomy group, or $\Spin(k)$ acting on $\R^n$ by its spin representation; in each case we fully classify the underlying PSCT geometries (up to understanding the local Riemannian factors).

We focus particularly on almost Hermitian structures ($G=\U(n/2)$), and obtain the following result which extends \cite[Thm.~A]{BPT} to the almost Hermitian setting. 

\textbf{Theorem C.} (Theorem \ref{thm:complex} below)
{\em Let $(M,g,\tau)$ be a PSCT geometry parallelizing a compatible almost complex structure $J$, i.e.~$\nabla^\tau J=0$. Then $(M,g,\tau)$ is locally isomorphic to a Riemannian product of the form
\[(H,g_H,\tau_H)\times\prod_{i=1}^s(S_i,g_i,\alpha_i\vol_i)\times (N,g_N,0),\]
where
\begin{itemize}
 \item $H$ is a Lie group with a bi-invariant metric $g_H$, compact Lie algebra $\h$, canonical three-form $\sigma$ and torsion form $\tau_H=-\frac12\sigma$, so that $\hol(\nabla^{\tau_H})=0$,
 \item for each $i$, $(S_i,g_i,\xi_i,\omega_i)$ are three-dimensional, non-round $\alpha_i$-Sasaki manifolds for some $\alpha_i\neq0$, with Reeb vector fields $\xi_i$, volume forms $\vol_i=\xi_i\wedge\omega_i$, and $\hol(\nabla^{\alpha_i\vol_i})=\u(1)$,
 \item $(N,g_N)$ is a Kähler manifold.
\end{itemize}
$J$ is determined by choosing transverse complex structures on $\xi_i^\perp\subset TS_i$, an almost complex structure on $\h\oplus\spann\{\xi_1,\ldots,\xi_r\}$, and the Kähler structure on $N$. If $(M,g)$ is complete and simply connected, the statement holds globally.}

We study next the possible Gray--Hervella classes \cite{GH} of PSCT geometries parallelizing a Hermitian structure. Using combinatorial results from the companion paper \cite{roots}, we classify PSCT geometries carrying a parallel semi-Kähler structure 
and we show that in dimension $n\geq6$, no other non-generic torsion class besides Hermitian and semi-Kähler occurs for PSCT almost Hermitian geometries with $\tau\neq0$:




\textbf{Theorem D.} (Theorems~\ref{thm:W1W3}, \ref{thm:W1W4} and \ref{thm:W3} below) 
{\em Let $(M^n,g)$ be a Riemannian manifold with $n\geq6$.
\begin{itemize}
\item If $(M,g)$ admits a PSCT almost Hermitian structure of class $\W_1\oplus\W_3$, then it is locally isomorphic to the product of a Kähler manifold with an even-dimensional Lie group whose Lie algebra is compact and different from $\su(2)\oplus\R$. Conversely, every such Riemannian product admits a PSCT almost Hermitian structure of class $\W_1\oplus\W_3$.
\item Every PSCT almost Hermitian structure on $(M,g)$ in the class $\W_1\oplus\W_4$ or $\W_3$ is automatically Kähler.
\end{itemize}}

 Note that every almost Hermitian PSCT geometry has automatically vanishing $\mathcal{W}_2$ component, so belongs to the $\mathcal{W}_1\oplus \mathcal{W}_3\oplus \mathcal{W}_4$ class. As mentioned above, the Gray--Hervella class $\W_3\oplus\W_4$ (corresponding to integrable Hermitian structures) was already studied in \cite[Thm.~A]{BPT}. Note also that the class $\W_1\oplus\W_3$, which appears in the first case of the above theorem, consists exactly of semi-Kähler almost Hermitian structures with skew-symmetric torsion.

Finally, in~\ref{sec:other} we study PSCT structures parallelizing further $G$-structures, and prove the following:

\textbf{Theorem E.} (Theorems~\ref{thm:parG}, \ref{thm:g2}, \ref{thm:spin7} and~\ref{thm:spink} below)
{\em
\begin{itemize}
\item Let $(M^n,g,\tau)$ be a simply connected PSCT geometry parallelizing a $G$-structure, where $G$ is either $\SU(k)$ with $n=2k\geq4$, or one of $\Sp(k)$, $\Sp(k)\Sp(1)$ with $n=4k\geq8$. Then $(M,g,\tau)$ decomposes as in \eqref{prod}, with $(M_i,g_i,\tau_i)$ being compact, simple Lie groups with flat $\nabla^{\tau_i}$. Moreover, if $G=\SU(k)$, then the irreducible torsion-free Riemannian factors are Calabi--Yau. If $G=\Sp(k)$ or $\Sp(k)\Sp(1)$ and $\tau\neq0$, then the irreducible torsion-free Riemannian factors are hyperkähler.
\item Every PSCT geometry $(M^7,g,\tau)$ with $\tau\neq0$ parallelizing a $\rmG_2$-structure is locally isomorphic to one of $S^3\times\R^4$, or $S^3\times S^3\times\R$, or $S^3\times X^4$, where $X^4$ is a $K3$ surface. In each of these cases, $S^3$ is equipped with a bi-invariant metric and flat Cartan--Schouten connection, while the other factors are torsion-free.
\item Let $(M^8,g,\tau)$ be a PSCT geometry with $\tau\neq0$, parallelizing a $\Spin(7)$-structure. Then it is locally isomorphic to one of $S^3\times\R^5$, or $S^3\times S^3\times\R^2$, or $\SU(3)$, or $S^3\times\R\times X^4$, where $X^4$ is a $K3$ surface. In each of these cases, $S^3$ resp.~$\SU(3)$ is equipped with a bi-invariant metric and flat Cartan--Schouten connection, while the other factors are torsion-free.
\item Let $(M^n,g,\tau)$ be a  PSCT geometry parallelizing a $\Spin(k)$-structure, where $k\geq8$. Then
\begin{enumerate}[\upshape(i)]
\item either $\tau=0$, $k=9$, and $(M,g)$ is locally isometric to the Cayley projective plane $\Oct\P^2$ or its noncompact dual $\Oct\H^2$, or
\item $\hol(\nabla^\tau)=0$ and $(M,g,\tau)$ is locally isomorphic to a compact Lie group with a left-invariant $\Spin(k)$-structure.
\end{enumerate}
\end{itemize}}

Note that in the $\rmG_2$- and $\Spin(7)$-case we reproduce and confirm the results of Papadopoulos \cite[Thm.~3.1 and 3.2]{Pap}. In the particular case of $\rmG_2$, the spaces mentioned above have been extensively discussed by Fino--Fowdar \cite{FF}, who also showed that a PSCT $\rmG_2$-structure has flat characteristic connection under a genericity assumption on certain torsion forms \cite[Thm.~6.2]{FF}. It is notable that these are essentially the only known examples of strong $\rmG_2$-manifolds with (not necessarily parallel) torsion, and it is unclear if there are any more; as Fino--Martín-Merchán--Raffero \cite{FMR} proved, a compact \emph{homogeneous} strong $\rmG_2$-manifold with torsion must be either $S^3\times S^3\times S^1$ or $S^3\times T^4$.

\paragraph*{Acknowledgments.} 
A.M.~was partially supported by the PNRR-III-C9-2023-I8 grant \textbf{CF 149/31.07.2023} (Conformal Aspects of Geometry and Dynamics).

P.S.~was supported by FAPESP project \textbf{2024/08127-4}, part of the BRIDGES collaboration, and received partial support by FAPESP grant \textbf{2024/19272-5}.

\section{Preliminaries}
\label{sec:prelim}

First, we fix our notation and conventions. Whenever we consider tensors on a finite-dimensional Euclidean vector space $(V,g)$, we freely raise and lower indices with respect to the metric $g$. In particular we identify $V\cong V^\ast$, and any two-form $\alpha\in\Lambda^2V$ will be identified with a skew-symmetric endomorphism also denoted by $\alpha\in\so(V,g)$ via
\[g(\alpha(X),Y)=\alpha(X,Y),\qquad X,Y\in V.\]
For a skew-symmetric endomorphism $A\in\so(V,g)$, we denote with $A_\ast$ its Lie algebra action on tensors over $V$. On an exterior form $\alpha\in\Lambda^kV$, this can be written as
\begin{equation}
\label{a}
A_\ast\alpha=\sum_iAe_i\wedge(e_i\intprod\alpha),
\end{equation}
where $(e_i)$ is any orthonormal basis of $(V,g)$. If $\alpha,\beta\in\Lambda^2V$, then $\alpha_\ast\beta=[\alpha,\beta]$ is the commutator of endomorphisms.

Any three-form $\tau\in\Lambda^3V$ gives rise to a $(2,1)$-tensor $\tau\in V^\ast\otimes\so(V,g)$ by
\[g(\tau_XY,Z)=\tau(X,Y,Z),\qquad X,Y,Z\in V.\]
We denote with $\stab(\tau)\subset\so(n)$ be the Lie algebra of the stabilizer group of $\tau$ in $\O(n)$, that is
\[\stab(\tau):=\{\alpha\in\so(V,g)\,|\,\alpha_\ast\tau=0\}.\]
Moreover, let $\g_\tau\subset\so(V,g)$ be the Lie subalgebra generated by all the $\tau_X$ where $X\in V$, first considered in \cite{AF}. The \emph{kernel} of $\tau$ is the subspace
\[\ker\tau:=\{X\in V\,|\,\tau_X=0\}.\]
The \emph{Bianchi map} $b: \Sym^2\Lambda^2V\to\Lambda^4V$ is defined by
\[b(R)(X,Y,Z,W):=R(X,Y,Z,W)+R(Y,Z,X,W)+R(Z,X,Y,W)\]
for any $R\in\Sym^2\Lambda^2V$, $X,Y,Z,W\in V$, so that $\frac13b$ is the orthogonal projection to $\Lambda^4V$. For any three-form $\tau\in\Lambda^3V$, let $\tau^2\in\Sym^2\Lambda^2V$ be the $4$-tensor defined by
\begin{equation}
\label{tau2}
\tau^2_{X,Y}Z:=\tau_Z\tau_XY,\qquad X,Y,Z\in V.
\end{equation}
Then we have 
\begin{equation}
\label{xbtau2}
X\intprod b(\tau^2)=(\tau_X)_\ast\tau,
\end{equation}
and from \eqref{a} it follows that
\begin{equation}
\label{btau2}
b(\tau^2)=-\frac12\sum_i\tau_{e_i}\wedge\tau_{e_i}\in\Lambda^4V.
\end{equation}

If $(M,g,\tau)$ is a geometry with parallel skew-symmetric torsion, we can write the curvature of $\nabla^\tau$ as $R^\tau=R^g+\tau^2+b(\tau^2)$. Moreover, $d\tau=-4b(\tau^2)$, cf.~\cite{AFF,MS}. All these facts lead to the following equivalent characterizations of PSCT geometries.

\begin{lem}
Let $(M,g,\tau)$ be a geometry with parallel skew-symmetric torsion. Then the following conditions are equivalent:
\begin{enumerate}[\upshape(i)]
    \item $(M,g,\tau)$ is a PSCT geometry, i.e.~$d\tau=0$.
    \item $b(\tau^2)=0$.
    \item For any $p\in M$, $(T_pM,g_p,\tau_p)$ is a metric Lie algebra.
    \item $\nabla^g\tau=0$.
    \item The curvature of $\nabla^\tau$ satisfies the first Bianchi identity, i.e.~$b(R^\tau)=0$.
    \item For any $X\in TM$, $(\tau_X)_\ast\tau=0$.
    \item $\g_\tau\subset\stab(\tau)$.
\end{enumerate}
\end{lem}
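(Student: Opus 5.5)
The plan is to prove the equivalences by a chain of elementary implications. Almost everything reduces pointwise to the algebraic identities \eqref{xbtau2} and \eqref{btau2}, together with the two facts recalled just before the statement: $d\tau=-4b(\tau^2)$ and $R^\tau=R^g+\tau^2+b(\tau^2)$.

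First, (i)$\Leftrightarrow$(ii) is immediate from $d\tau=-4b(\tau^2)$. Next, (ii)$\Leftrightarrow$(vi) follows from \eqref{xbtau2}: a $4$-form vanishes iff all its contractions $X\intprod b(\tau^2)=(\tau_X)_\ast\tau$ vanish. For (vi)$\Leftrightarrow$(vii), note that $\stab(\tau)$ is a Lie subalgebra of $\so(V,g)$. Hence $\g_\tau\subset\stab(\tau)$ iff all generators $\tau_X$ lie in $\stab(\tau)$, which is exactly (vi).

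For (iii), define the bracket $[X,Y]:=\tau_XY$ on $T_pM$. It is skew because $\tau$ is a $3$-form, and ad-invariant of the metric for the same reason: $g([X,Y],Z)=\tau(X,Y,Z)$ is totally skew. So the only content is the Jacobi identity. Jacobi says $\ad_X=\tau_X$ is a derivation of the bracket, i.e.
\[\tau_X(\tau_YZ)=\tau_{\tau_XY}Z+\tau_Y\tau_XZ.\]
Pairing with $W$, this is precisely $((\tau_X)_\ast\tau)(Y,Z,W)=0$, using \eqref{a}. Thus (iii)$\Leftrightarrow$(vi). This is a small sign computation, which I would carry out carefully but which is routine.

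For (iv), use $\nabla^g=\nabla^\tau-\tau$. Since $\nabla^\tau\tau=0$, we get $\nabla^g_X\tau=-(\tau_X)_\ast\tau$, up to the sign convention for how the endomorphism $\tau_X$ acts. Hence (iv)$\Leftrightarrow$(vi).

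Finally, for (v), apply $b$ to $R^\tau=R^g+\tau^2+b(\tau^2)$. We have $b(R^g)=0$ by the Riemannian first Bianchi identity. Moreover, since $\frac13b$ is the orthogonal projection onto $\Lambda^4$, $b$ acts as multiplication by $3$ on $\Lambda^4V$, so $b(b(\tau^2))=3b(\tau^2)$. This gives
\[b(R^\tau)=b(\tau^2)+3b(\tau^2)=4b(\tau^2),\]
so (v)$\Leftrightarrow$(ii).

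The main point requiring care is the sign and normalization bookkeeping, so I would check each of these identities against the conventions fixed in the preliminaries. I would especially check the derivation identity behind (iii) and the action of $\tau_X$ on forms. However, any constant factor is irrelevant for the equivalences, since each condition is the vanishing of the same tensor $b(\tau^2)$ up to a nonzero multiple.
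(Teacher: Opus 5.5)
Your proposal is correct and follows the route the paper takes: the paper gives no separate proof and simply derives the lemma from $d\tau=-4b(\tau^2)$, $R^\tau=R^g+\tau^2+b(\tau^2)$ and \eqref{xbtau2}, which are exactly the facts you use to reduce every condition to $b(\tau^2)=0$, including $b(R^\tau)=4b(\tau^2)$ and $\nabla^g_X\tau=-(\tau_X)_\ast\tau$. The sign checks you flagged work out: \eqref{a} is the usual derivation action $(A_\ast\alpha)(Y_1,\dots,Y_k)=-\sum_j\alpha(Y_1,\dots,AY_j,\dots,Y_k)$, so $(\tau_X)_\ast\tau=0$ is precisely the Jacobi identity for $[X,Y]:=\tau_XY$.
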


Lastly, we review a few well-known geometries that will become relevant in the sequel.

\begin{defn}\label{a-sas}
Let $\alpha\in\R^\times$. An \emph{$\alpha$-Sasaki manifold} $(M, g, \xi, \Phi)$ is a Riemannian manifold $(M^{2n+1},g)$ together with a unit length Killing vector field $\xi$ and a skew-symmetric endomorphism $\Phi$ satisfying
\begin{equation}\label{das}
   \Phi^2 = -\Id + \xi \otimes \xi, \quad  \nabla^g_X\xi = \alpha \Phi(X), \quad
        \nabla^g_X\Phi = \alpha \xi \wedge X,\qquad\forall X\in T M.
\end{equation}
\end{defn}
Every $\alpha$-Sasaki manifold $(M, g, \xi, \Phi)$ has a \emph{characteristic connection} $\nabla^\tau$ with $\tau:=\alpha\xi\wedge\Phi$ that parallelizes the vector field $\xi$ and the endomorphism $\Phi$. In particular $\nabla^\tau\tau=0$, so $(M,g,\tau)$ is a geometry with parallel skew-symmetric torsion.

A Sasaki manifold is nothing but a $1$-Sasaki manifold. If $(M,g,\xi,\Phi)$ is $\alpha$-Sasaki, then $(M,\alpha^2g,\alpha^{-1}\xi,\Phi)$ is a Sasaki manifold.

\begin{ex}\label{as}
Let $(M,g)$ be an oriented 3-dimensional Riemannian manifold and $\tau:=\alpha\vol_g$ for some $\alpha\in\R^\times$. Then $(M,g,\tau)$ is a PSCT geometry. A unit length Killing vector field $\xi$ is $\nabla^\tau$-parallel if and only if $(M,g,\xi,\Phi:=\hodge\xi)$ is $\alpha$-Sasaki. Moreover $\nabla^\tau$ is flat if and only if $(M,g)$ is locally isometric to the round sphere of radius $1/\alpha$, cf.~\cite[Prop.~4.12, Rem.~4.13]{MS}.
\end{ex}

\begin{ex}\label{4dim}
Let $(M,g,\tau)$ be an oriented 4-dimensional geometry with parallel skew-symmetric torsion. Then it is automatically PSCT: the vector field $\theta:=\hodge\tau$ is $\nabla^\tau$-parallel and satisfies $\tau_X\theta=0$ for every $X\in TM$, since
\[\theta\intprod\tau=-\theta\intprod\hodge\theta=\hodge(\theta\wedge\theta)=0.\]
Thus $\theta$ is $\nabla^g$-parallel, so the same holds for $\tau$.
\end{ex}

\begin{ex}\label{liegroups}
Let $G$ be a Lie group with Lie algebra $\g$, endowed with a bi-invariant metric $g$. This entails that $\g$ is compact, i.e.~a product of compact simple Lie algebras and an abelian one.

The Lie bracket together with the metric defines a three-form $\sigma\in\Omega^3(G)$ by
\[\sigma(X,Y,Z):=g([X,Y],Z)\]
for left-invariant vector fields $X,Y,Z\in\g$ that we call the \emph{canonical three-form}. There is a distinguished one-parameter family $(\nabla^t)_{t\in\R}$ of invariant metric connections on $G$, called the \emph{Cartan--Schouten family}, defined by
\[\nabla^t:=\nabla^g+\frac{t}{2}\sigma.\]
These connections all parallelize $\sigma$, and the Jacobi identity implies that $b(\sigma^2)=0$, so $(G,g,t\sigma)$ is a PSCT geometry for any $t\in\R$. The connection $\nabla^t$ is related to $\nabla^{-t}$ by an isometric anti-automorphism of $G$, namely the map $x\mapsto x^{-1}$. If $t=\pm1$, then the connections $\nabla^t$ are flat, and called the \emph{$(\pm)$-connections}. Since $\nabla^g_XY=\frac12[X,Y]$ for left-invariant vector fields $X,Y\in\g$, it follows that the $(-)$-connection parallelizes left-invariant vector fields; similarly, the $(+)$-connection parallelizes right-invariant vector fields.
\end{ex}

\begin{ex}
Let again $G$ be a Lie group with compact Lie algebra $\g$ and bi-invariant metric $g$. Then the non-compact dual $G^\C/G$ of the symmetric space $G=\frac{G\times G}{\diag(G)}$ also carries an invariant canonical three-form and hence one can define an analogue of the Cartan--Schouten family on $G^\C/G$. The same goes for the Lie algebra $\g$ itself, with a flat invariant metric, cf.~\cite[Ex.~4.2]{AFF}. All of these connections define PSCT geometries, but except for the Levi-Civita connection on $\g$, the connection $\nabla^t$ is never flat.
\end{ex}

\section{A decomposition theorem}
\label{sec:decomp}

In this section we show that any PSCT geometry locally splits into a product of known geometries. This splitting result is morally already contained in \cite{AFF}; however, the authors do not justify why $\g_\tau$-invariant subbundles of $TM$ are parallel, and they only reduce the problem to the study of $\g_\tau$-irreducible PSCT geometries, leaving open what exactly those are. With the results of \cite{MS}, this question can now be answered completely. 

\begin{thm}
\label{thm:parallel_dist}
Let $(M,g,\tau)$ be a PSCT geometry with $\ker\tau=0$, and let $TM=\bigoplus_iT_i$ be a decomposition into $\g_\tau$-irreducible distributions. Then the distributions $T_i$ are parallel under both $\nabla^g$ and $\nabla^\tau$.
\end{thm}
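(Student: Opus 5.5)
The plan is to show first that the $T_i$ are $\nabla^\tau$-parallel, and then to deduce $\nabla^g$-parallelism from the identity $\nabla^g=\nabla^\tau-\tau$. The main tool is the algebraic structure at a point given by the Lemma in \ref{sec:prelim}: for every $p\in M$, $(T_pM,g_p,\tau_p)$ is a metric Lie algebra, with bracket $[X,Y]=\tau_XY$ and positive definite $\ad$-invariant inner product $g_p$.

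\textbf{Step 1: pointwise algebra.} Since $g_p$ is positive definite and $\ad$-invariant, the Lie algebra $\m:=T_pM$ is reductive of compact type, i.e.\ a direct sum of its centre and of compact simple ideals. The centre of $\m$ is exactly $\ker\tau_p$, which vanishes by hypothesis, so $\m$ is semisimple of compact type. The Lie algebra $\g_\tau$ generated by the $\tau_X=\ad_X$ is therefore $\ad(\m)$. Hence the $\g_\tau$-invariant subspaces of $T_pM$ are precisely the ideals of $\m$, and the $\g_\tau$-irreducible ones are the simple ideals. In a semisimple Lie algebra every ideal is a sum of simple ideals, and the decomposition into simple ideals is unique. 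So, whatever decomposition $TM=\bigoplus_i T_i$ was chosen, each $(T_i)_p$ must be one of the simple ideals of $T_pM$.

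\textbf{Step 2: $\nabla^\tau$-parallelism.} Since $\nabla^\tau\tau=0$, the Ambrose--Singer theorem gives $\hol_p(\nabla^\tau)\subset\stab(\tau_p)$. An element $A\in\so(T_pM)$ with $A_\ast\tau=0$ satisfies $A[X,Y]=[AX,Y]+[X,AY]$, so it is a (skew) derivation of $\m$. Every derivation of a semisimple Lie algebra is inner, so
\[\stab(\tau_p)=\operatorname{Der}(\m)\cap\so(T_pM)=\ad(\m)=\g_\tau.\]
It follows that $\hol(\nabla^\tau)\subset\g_\tau$, which preserves each $T_i$ by Step 1. Applying this on small balls, where the restricted holonomy is the relevant one, we get that the $\nabla^\tau$-parallel transport of $(T_i)_p$ defines a parallel distribution. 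That distribution coincides with $T_i$ near $p$: parallel transport preserves $\tau$, so it maps simple ideals to simple ideals, and $\hol\subset\g_\tau$ means transport around loops fixes each ideal. Hence $\nabla^\tau T_i\subset T_i$.

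Alternatively one can argue directly with a local section. Since $T_i$ is a smooth distribution of $\g_\tau$-invariant subspaces, $\nabla^\tau_X$ of a section of $T_i$ stays in $T_i$. This holds because parallel transport is a Lie algebra isomorphism, so it maps the finite set of simple ideals onto itself, and by continuity along a curve each ideal must be fixed.

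\textbf{Step 3: Levi-Civita.} We have $\nabla^g_XY=\nabla^\tau_XY-\tau_XY$. Since $\tau_X\in\g_\tau$ preserves every $T_i$, the distributions $T_i$ are $\nabla^g$-parallel as well.

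The main obstacle is Step 1: identifying $\g_\tau$-irreducible subspaces with simple ideals, and using semisimplicity (guaranteed by $\ker\tau=0$) to get both uniqueness of the decomposition and $\stab(\tau)=\g_\tau$. Once this is in place, the parallelism statements follow formally.
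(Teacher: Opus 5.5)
Your proof is correct and follows essentially the same route as the paper. You establish pointwise uniqueness of the decomposition into $\g_\tau$-irreducibles from the simple-ideal decomposition of the compact semisimple Lie algebra $(T_pM,\tau_p)$, where the paper uses pairwise inequivalence of the $T_i$ as $\g_\tau$-representations. Both arguments then use that $\nabla^\tau$-parallel transport preserves $\tau$ and so permutes the summands, a continuity argument shows the permutation is trivial, and $\nabla^g=\nabla^\tau-\tau$ with $\tau_X\in\g_\tau$ finishes. The inclusion $\hol(\nabla^\tau)\subset\stab(\tau)=\g_\tau$ in your Step 2 is superfluous. On its own it does not show that the transported distribution is $T_i$ rather than another $T_j$; that step is closed by the continuity-along-curves argument in your ``alternatively'' paragraph.
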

\begin{proof}
We first remark that $\tau\in\bigoplus_i\Lambda^3T_i$ and that the distributions $T_i$ are mutually orthogonal by \cite[Prop.~4.2]{AF}. If we denote by $\tau_i$ the component of $\tau$ in $\Lambda^3T_i$, then one can write $\g_\tau=\bigoplus_i\g_{\tau_i}$ where each $\g_{\tau_i}$ is a Lie subalgebra of $\so(T_i)$ and $\ker\g_{\tau_i}|_{T_i}=0$. Moreover, any two $T_i$ and $T_j$, $i\neq j$, are inequivalent representations  of $\g_{\tau}$, since $\g_{\tau_i}\subset\g_\tau$ acts trivially on $T_j$ and nontrivially on $T_i$.

For each $i$, the $\g_\tau$-invariance of the distribution $T_i$ directly implies that it is $\nabla^g$-parallel if and only if it is $\nabla^\tau$-parallel. We will show that this last property holds.

Fix any point $x\in M$ and loop $\gamma$ based at $x$, and let $\varphi:T_xM\to T_xM$ be the parallel transport along $\gamma$ with respect to $\nabla^\tau$. The $\g_\tau$-invariant decomposition $T_xM=\bigoplus_i T_i$ gives rise to an $\Ad_\varphi(\g_\tau)$-invariant decomposition $T_xM=\bigoplus_i \varphi(T_i)$. But $\tau$ is $\nabla^\tau$-parallel, so $\Ad_\varphi(\g_\tau)=\g_\tau$. As we have seen above, the representations $T_i$ are pairwise inequivalent, so any splitting of $T_xM$ into $g_\tau$-irreducible representations is unique (up to reordering the summands).

By continuity, this shows that $\varphi(T_i)=T_i$ for every $i$, so the distributions $T_i$ are indeed $\nabla^\tau$-parallel.
\end{proof}

\begin{cor}
\label{cor:split}
Any PSCT geometry $(M,g,\tau)$ splits locally as a Riemannian product,
\[(M,g,\tau)=\prod_i(M_i,g_i,\tau_i),\qquad \tau=\sum_i\tau_i,\]
into PSCT factors which are $\stab(\tau_i)$-irreducible. If $(M,g)$ is complete and simply connected, the above decomposition holds globally.
\end{cor}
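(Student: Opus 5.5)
The plan is to first split off the kernel of $\tau$, then apply Theorem~\ref{thm:parallel_dist} to the complement, and finally show that each resulting piece is $\stab(\tau_i)$-irreducible.

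\emph{Step 1: the kernel of $\tau$.} Let $T_0:=\ker\tau$. Since $\nabla^g\tau=0$ by the Lemma (iv), the distribution $T_0$ is $\nabla^g$-parallel. Hence so is its orthogonal complement $T_0^\perp$. Because $\tau_X=0$ for $X\in T_0$, we get $\tau\in\Lambda^3T_0^\perp$. By de Rham's theorem, $M$ splits locally as $M_0\times M'$ with $\tau_{|M_0}=0$. The factor $M_0$ can then be further split, via the de Rham decomposition of its Levi-Civita holonomy, into Riemannian-irreducible factors with zero torsion and flat factors. For a torsion-free factor we have $\stab(0)=\so(T)$, which acts irreducibly, so we may split a flat factor into lines if needed.

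\emph{Step 2: the complement.} On $M'$ we have $\ker\tau=0$, and $\tau$ is still $\nabla^g$-parallel. Decompose $TM'=\bigoplus_i T_i$ into $\g_\tau$-irreducible distributions. By Theorem~\ref{thm:parallel_dist}, each $T_i$ is $\nabla^g$-parallel, and from its proof $\tau=\sum_i\tau_i$ with $\tau_i\in\Lambda^3T_i$. De Rham's theorem gives a local product $M'=\prod_i M_i$, with $\tau_i$ living on $M_i$. Since $\nabla^g\tau=0$ and the splitting is parallel, each $\tau_i$ is $\nabla^{g_i}$-parallel on $M_i$. Thus $(M_i,g_i,\tau_i)$ is a PSCT geometry: $\nabla^{\tau_i}\tau_i=0$ follows from $\nabla^{g_i}\tau_i=0$ and $(\tau_i)_X\tau_i=0$.

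\emph{Step 3: stabilizer-irreducibility.} This is the main point. The Lemma (vii) gives $\g_{\tau_i}\subset\stab(\tau_i)$. Since $\g_{\tau_i}$ already acts irreducibly on $T_i$ (it is the part of $\g_\tau$ acting nontrivially there), the larger algebra $\stab(\tau_i)$ is irreducible as well. The subtle issue is checking that $T_i$ is irreducible under $\g_{\tau_i}$, not merely under $\g_\tau$. This holds because the other summands $\g_{\tau_j}$, $j\ne i$, act trivially on $T_i$.

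\emph{Global statement.} If $(M,g)$ is complete and simply connected, the de Rham decomposition theorem holds globally, so the splittings in Steps 1 and 2 can be performed globally, and the same argument applies.
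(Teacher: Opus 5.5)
Your proposal is correct and follows the paper's proof: you split off $\ker\tau$ by de Rham, apply Theorem~\ref{thm:parallel_dist} to the complement, and conclude from $\g_{\tau_i}\subset\stab(\tau_i)$ and the irreducibility of $\g_{\tau_i}$ on $T_i$. The differences are minor. You derive the parallelism of $\ker\tau$ directly from $\nabla^g\tau=0$ instead of citing \cite{AFF}, and you check explicitly that the factors are PSCT. Your further de Rham splitting of $M_0$ is harmless but unnecessary, since $\stab(0)=\so(TM_0)$ already acts irreducibly on $TM_0$.
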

\begin{proof}
First, we recall that $\ker\tau\oplus(\ker\tau)^\perp$ is a splitting into $\nabla^g$-parallel subbundles \cite[Thm.~3.4]{AFF}. With de Rham's theorem, we can therefore write locally $(M,g,\tau)=(M_0,g_0,0)\times(M',g',\tau)$, and the first factor is of course irreducible under $\stab(0)=\so(\dim M_0)$. We can now apply Theorem~\ref{thm:parallel_dist} to $(M',g',\tau)$, and again with de Rham's theorem we obtain locally a splitting $(M',g',\tau)=\prod_i(M_i,g_i,\tau_i)$ into pieces where $\g_{\tau}$, and hence $\g_{\tau_i}$, acts irreducibly on $TM_i$. But $\g_{\tau_i}\subseteq\stab(\tau_i)$, so the $(M_i,g_i,\tau_i)$ are indeed $\stab(\tau_i)$-irreducible.
All the above decompositions are global if $(M,g)$ is complete and simply connected.
\end{proof}

By going through the list of $\stab(\tau)$-irreducible geometries with parallel skew-symmetric torsion in \cite{MS}, we can single out the possible $\stab(\tau)$-irreducible PSCT geometries:

\begin{thm}
\label{thm:irred}
Any $\stab(\tau)$-irreducible PSCT geometry with $\tau\neq 0$ is locally isomorphic to one of the following:
\begin{enumerate}[\upshape(i)]
 \item a simple compact Lie group $H$ with bi-invariant metric, where $\tau$ is a nonzero multiple of the canonical $3$-form, and $\hol(\nabla^\tau)$ is either trivial or $\h$,
 \item a flat Lie algebra $\h$, where $\tau$ is a nonzero multiple of the canonical $3$-form, and $\hol(\nabla^\tau)=\h$,
 \item an irreducible noncompact symmetric space $H^\C/H$, where $\tau$ is a nonzero multiple of the canonical $3$-form, and $\hol(\nabla^\tau)=\h$,
 \item a $3$-dimensional Riemannian manifold $(M,g)$, where $\tau$ is a nonzero multiple of $\vol_g$, and $\hol(\nabla^\tau)$ is either trivial, $\u(1)$ or $\so(3)$, according to whether $(M,g)$ is a round $3$-sphere with $\nabla^\tau$ the $(\pm)$-connection, a non-round $\alpha$-Sasaki manifold, or generic.
\end{enumerate}
\end{thm}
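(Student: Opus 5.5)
The plan is to go through the list of $\stab(\tau)$-irreducible geometries with parallel skew-symmetric torsion from \cite[Thm.~4.4]{MS}. There, such a geometry with $\tau\neq0$ is one of four types: an Ambrose--Singer manifold, a nearly Kähler $6$-manifold, a nearly parallel $\rmG_2$-manifold, or a $3$-dimensional manifold with $\tau$ a multiple of $\vol_g$. I will impose $b(\tau^2)=0$ on each type, using the equivalent characterizations in the Lemma above.

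The nearly Kähler and nearly parallel $\rmG_2$ cases should be excluded by pointwise algebra. For a nearly Kähler $6$-manifold, $\tau$ is a multiple of $\mathrm{Re}\,\Psi$. For nearly parallel $\rmG_2$, $\tau$ is a multiple of $\varphi$. In both cases $d\tau$ is a nonzero multiple of $\mathrm{Im}\,\Psi$ resp.\ of $\hodge\varphi$, so $d\tau\neq0$. Equivalently, $\g_\tau$ is $\so(6)$ resp.\ $\so(7)$, which is not contained in $\stab(\tau)=\su(3)$ resp.\ $\g_2$. In the $3$-dimensional case, $d\tau=0$ holds automatically, since $\tau=\alpha\vol_g$ with constant $\alpha$. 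The description of $\hol(\nabla^\tau)$ then comes from Example~\ref{as} together with \cite[Prop.~4.12, Rem.~4.13]{MS}. The holonomy is a subalgebra of $\so(3)$, hence $0$, $\u(1)$ or $\so(3)$. Holonomy $\u(1)$ means there is a $\nabla^\tau$-parallel unit vector field $\xi$, and $\xi$ is Killing because $\tau$ is skew. By Example~\ref{as} this is exactly the $\alpha$-Sasaki case, and non-roundness follows because the holonomy is not trivial.

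The main case is the Ambrose--Singer one. Here $\tau$ and $R^\tau$ are $\nabla^\tau$-parallel, so the space is locally naturally reductive, modelled on $\mathfrak{k}=\hol\oplus\m$ with $\m=T_pM$. By condition (iii) of the Lemma, $(\m,\tau_p)$ is a metric Lie algebra, and $\stab$-irreducibility together with $\ker\tau=0$ forces it to be simple (and compact-type, since the metric is positive definite). Call it $\h$, so $\tau$ is a multiple of the canonical $3$-form $\sigma$. Then $\stab(\tau)=\ad(\h)\cong\h$, which acts irreducibly on $\m=\h$. The holonomy algebra $\hol(\nabla^\tau)$ sits inside $\stab(\tau)$ and is an ideal of it: it is normalized by $\g_\tau=\ad\h$, because $R^\tau$ is parallel and $b(R^\tau)=0$. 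Since $\h$ is simple, this leaves $\hol\in\{0,\h\}$.

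If $\hol=0$, then $\nabla^\tau$ is flat with parallel torsion, so locally $M$ is a Lie group with a bi-invariant metric and $\nabla^\tau$ is a $(\pm)$-connection, which is case (i). If $\hol=\h$, then $R^\tau$ is an $\h$-invariant element of $\Sym^2\h$ satisfying the Bianchi identity. Since $\h$ is simple, $\Sym^2\h$ has a one-dimensional space of invariants, so $R^\tau=c\,\sigma^2$ up to normalization, i.e.\ $R^\tau$ is a multiple of the Casimir-type tensor. The transvection algebra is then $\h\oplus\h$ with bracket on $\m$ given by $[X,Y]=-\tau_XY+c\,\ad_{[X,Y]}$ (schematically). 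The sign of $c$ relative to $\tau$ decides which model we get: the compact case is $H\times H/\Delta H=H$, giving (i) with $\hol=\h$; the degenerate case is $\h\ltimes\h$, the flat Lie algebra, giving (ii); and the noncompact case is $H^\C/H$, giving (iii).

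I expect this last identification to be the main obstacle. It requires computing $R^\tau$ explicitly through $R^g=R^\tau-\tau^2$, using $R^\tau=c\,\sigma^2$ and $\tau=t\sigma$, and matching the result against the curvature of the three symmetric models $H$, $\h$ and $H^\C/H$. My first attempt will be to quote \cite{MS} or \cite[Ex.~4.2]{AFF} for the structure of these naturally reductive models rather than redoing the Nomizu construction by hand.
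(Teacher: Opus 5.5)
There is a genuine gap in your Ambrose--Singer analysis. You assert that $\hol(\nabla^\tau)$ is an ideal of $\stab(\tau)=\ad\h$ ``because $R^\tau$ is parallel and $b(R^\tau)=0$''. Those two facts only give that $\hol$ is spanned by the $R^\tau_{X,Y}$ at one point, and that $R^\tau\in\Curv(\hol)$. They say nothing about invariance under $\tau_Z$. Since $R^\tau_{X,Y}\in\stab(\tau)$, you know $[\tau_Z,R^\tau_{X,Y}]=-\tau_{R^\tau_{X,Y}Z}$, and the second Bianchi identity $\mathfrak{S}_{X,Y,Z}R^\tau_{\tau_XY,Z}=0$ combined with pair symmetry just reproduces this. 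Nothing forces $\tau_{R^\tau_{X,Y}Z}$ into $\hol$.

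In fact the claim is false. Take a Berger sphere, i.e.~a homogeneous non-round $\alpha$-Sasaki $S^3$. It is naturally reductive with $\nabla^\tau$ its canonical connection, so $\tau$ and $R^\tau$ are parallel. Also $b(R^\tau)=0$ trivially, and $\stab(\tau)=\so(3)=\ad\su(2)$ acts irreducibly. Yet $\hol(\nabla^\tau)=\u(1)$ is not an ideal. So as written, your argument would misclassify these spaces unless you explicitly confine the Ambrose--Singer analysis to dimension $>3$.

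In dimension $>3$ the conclusion $\hol\in\{0,\ad\h\}$ does hold, but it needs an extra idea; this is what the paper imports via \cite[Prop.~4.5]{MS}.

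\emph{Berger algebra.} Because $\hol$ is spanned by the image of some $R^\tau\in\Curv(\hol)$, it is a Berger algebra. Hence it is a direct sum of irreducible Riemannian holonomy algebras $\mathfrak l_i$ acting on mutually orthogonal subspaces $V_i\subset\h$.

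\emph{Nonabelian summands.} View $\mathfrak l_i\subset\h$ via $\ad$. Then $[\mathfrak l_i,\mathfrak l_i]$ is an $\hol$-submodule of $\h$ with no trivial summands, so it equals $V_i$. Since $\mathfrak l_i$ kills the other summands, $V_i$ is an ideal of the simple algebra $\h$, forcing $\mathfrak l_i=\ad\h$.

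\emph{Abelian summands.} An abelian $\mathfrak l_i$ is $\so(2)$ acting on a single $2$-plane. This needs some $\ad x$ of rank $2$, which only happens for $\h=\su(2)$.

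The same observation also disposes of the non-symmetric isotropy irreducible spaces, since there $\Curv(\hol)=0$ forces $R^\tau=0$. This is essentially the paper's argument in its case (1).

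With this step supplied, the rest of your route works and is a genuinely different, more self-contained approach. It uses the Lie algebra structure on $T_pM$ from condition (iii) of the Lemma and $R^\tau$ as a nonzero multiple of the Casimir element. The transvection algebra is then $\h\oplus\h$, $\h\ltimes\h$ or $\h^\C$, giving $H$, the flat $\h$, and $H^\C/H$. The paper instead splits according to whether $\Curv(\stab(\tau))$ vanishes and reads the symmetric models off the proof of \cite[Thm.~4.4]{MS}.

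A minor slip elsewhere: for nearly Kähler $6$-manifolds, $d\tau$ is a $4$-form, namely a nonzero multiple of $\omega\wedge\omega$, not of $\mathrm{Im}\,\Psi$. Your alternative argument, that $\tau_X\notin\stab(\tau)$, is the clean one.
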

\begin{proof}
We apply \cite[Thm.~4.4]{MS} to $\mathfrak{h}:=\stab(\tau)$, which contains $\hol(\nabla^\tau)$ since $\nabla^\tau\tau=0$, and consider the following dichotomy:
\begin{enumerate}[(1)]
\item The Berger algebra $\Curv(\h)$ is trivial. Then $(M,g)$ is locally isometric to a non-symmetric isotropy irreducible homogeneous space $G/H$ with $\h=\mathrm{Lie}(H)$, and $\nabla^\tau$ is the canonical connection, cf.~\cite[\S2.4]{MS}. In this case, if $\g=\h\oplus\m$ is a reductive decomposition of $\g=\mathrm{Lie}(G)$, $\m$ can be identified with the tangent space at some point and $\tau(X,Y)=-\tfrac12[X,Y]_\m$ for every $X,Y\in\m$. The condition $b(\tau^2)=0$, together with the fact that $[\h,\m]\subset\m$ and the Jacobi identity in $\g$, gives for every $X,Y,Z\in \m$:
\begin{align*}
0&=\tau_Z\tau_XY+\tau_X\tau_YZ+\tau_Y\tau_ZX\\
&=\frac{1}{4}([Z,[X,Y]_\m]_\m+[X,[Y,Z]_\m]_\m+[Y,[Z,X]_\m]_\m)\\
&=-\frac{1}{4}([Z,[X,Y]_\h]+[X,[Y,Z]_\h]+[Y,[Z,X]_\h]).
\end{align*}
However this means that the map $\pr_\h: \Lambda^2\m\to\Lambda^2\m$ belongs to $\Curv(\h)$, which is a contradiction to $\Curv(\h)=0$.
\item $\Curv(\h)\neq0$. By the proof of \cite[Thm.~4.4]{MS}, this together with $b(\tau^2)=0$ implies that $(M,g)$ is either three-dimensional with $\tau$ a nonzero multiple of $\vol_g$, or locally isometric to one of the irreducible symmetric spaces $(H\times H)/H$ or $H^\C/H$ with $\h=\mathrm{Lie}(H)$, or to the Euclidean vector space $\h$, and $\tau$ is any nonzero multiple of the canonical 3-form on $\h$. This gives cases (iv) resp.~(i)--(iii) of the theorem statement.

In cases (i)--(iii), the condition $b(\tau^2)=0$ is just the Jacobi identity of $\h$, and $\hol(\nabla^\tau)$ is either $0$ or $\h$ by \cite[Prop.~4.5]{MS}, with the former only possible in case (i).

Finally, in case (iv), the condition $b(\tau^2)=0$ is vacuous because any four-form vanishes on a three-manifold. The characterization of $\hol(\nabla^\tau)$ when $\tau=\alpha\vol_g$ for some $\alpha\in\R^\times$ follows from Remark \ref{as}, cf.~\cite[Prop.~4.12 and Rem.~4.13]{MS}.
\end{enumerate}
\end{proof}

\begin{rem}\label{overlap}
In case (i), the holonomy algebra is trivial if and only if $\nabla^\tau$ is one of the $(\pm)$-connections from Example~\ref{liegroups}. Moreover, there is a small overlap between the entries (i) and (iv): the round $S^3$ with its Cartan--Schouten connections appears in both cases.
\end{rem}

We can summarize our findings as follows:

\begin{cor}\label{corprod}
Every PSCT geometry $(M,g,\tau)$ is locally isomorphic to a Riemannian product of the form
\begin{equation}
\prod_{i=1}^r(M_i,g_i,\tau_i)\times(\R^k,g_{\mathrm{Eucl}},0)\times\prod_{j=1}^s(N_j,h_j,0)
\label{prod}
\end{equation}
where $(M_i,g_i,\tau_i)$ are spaces from Theorem~\ref{thm:irred}, $(\R^k,g_{\mathrm{Eucl}})$ is flat, and $(N_j,h_j)$ are irreducible, non-flat Riemannian manifolds. The torsion $\tau=\sum_i\tau_i$ is only supported on the factors $M_i$ (with $\tau_i\neq0$ for every $i$), and the holonomy algebra is
\[\hol(\nabla^\tau)=\diag(\hol(\nabla^{\tau_1}),\ldots,\hol(\nabla^{\tau_r}),\underbrace{0,\ldots,0}_{k},\hol(\nabla^{h_1}),\ldots,\hol(\nabla^{h_s})).\]
If $(M,g,\tau)$ is complete and simply connected, then it is globally isomorphic to~\eqref{prod}.
\end{cor}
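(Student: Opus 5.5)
The plan is to combine Corollary~\ref{cor:split} with de Rham's decomposition theorem applied to the torsion-free part, and then use Theorem~\ref{thm:irred} for the factors carrying torsion.

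First, by Corollary~\ref{cor:split} (more precisely its proof), $TM=\ker\tau\oplus(\ker\tau)^\perp$ is a splitting into $\nabla^g$-parallel subbundles. Hence locally
\[
(M,g,\tau)=(M_0,g_0,0)\times(M',g',\tau),
\]
where $\tau$ has trivial kernel on $M'$. Applying Theorem~\ref{thm:parallel_dist} to $M'$ splits it into $\stab(\tau_i)$-irreducible PSCT factors $(M_i,g_i,\tau_i)$ with $\tau_i\neq0$. Each factor is then one of the spaces listed in Theorem~\ref{thm:irred}.

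Second, on $M_0$ the torsion vanishes, so $\nabla^\tau=\nabla^g$ there. I apply the local de Rham theorem to $(M_0,g_0)$. The $\nabla^g$-parallel flat part gives the factor $(\R^k,g_{\mathrm{Eucl}})$, and its orthogonal complement decomposes into holonomy-irreducible, non-flat Riemannian factors $(N_j,h_j)$.

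Third, the holonomy statement. Since $\tau=\sum_i\tau_i$ with each $\tau_i\in\Omega^3(M_i)$ pulled back from its factor, $\nabla^\tau$ is the product connection
\[
\nabla^{\tau_1}\times\cdots\times\nabla^{\tau_r}\times\nabla^{\mathrm{Eucl}}\times\nabla^{h_1}\times\cdots\times\nabla^{h_s}.
\]
Indeed, $\nabla^g$ is the product of the Levi-Civita connections, and the torsion term $\tau_X Y$ only mixes vectors tangent to the same factor. The holonomy algebra of a product connection is the direct sum of the factor holonomy algebras, which gives the displayed block-diagonal formula, with a zero block on the flat factor.

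Finally, the global statement. If $(M,g)$ is complete and simply connected, the parallel splittings from Corollary~\ref{cor:split} are global, and the global de Rham theorem applied to the complete, simply connected factor $M_0$ yields a global isometric product. I expect the main point needing care to be this passage from local to global. One must check that completeness and simple connectedness pass to the factor $M_0$ of the global splitting, which holds because factors of a complete, simply connected Riemannian product are again complete and simply connected. One must also check that the torsion is carried factorwise under the global isometry; this follows from the parallelism of the distributions and the fact that $\tau$ has no mixed components.
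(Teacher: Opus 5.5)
Your proposal is correct and is essentially the argument the paper intends; the paper states this corollary as a summary of Corollary~\ref{cor:split} and Theorem~\ref{thm:irred} without a separate proof. Like that intended argument, you split off $\ker\tau$, decompose $(\ker\tau)^\perp$ via Theorem~\ref{thm:parallel_dist}, apply de Rham to the torsion-free part, and read off the holonomy from $\nabla^\tau$ being the product connection; your remarks on the global case (factors of a complete simply connected product are complete and simply connected, and $\tau$ is the pullback of the $\tau_i$) fill in details the paper leaves implicit.
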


\begin{rem}
When $\ker\tau=0$, we observe from Theorem~\ref{thm:irred} that actually $\g_\tau=\stab(\tau)$. This may be seen a priori: each tangent space is a metric, thus compact, Lie algebra isomorphic to $\g_\tau$, with Lie bracket given by $\tau$. If $\ker\tau=0$, this Lie algebra is semisimple. The Lie algebra $\stab(\tau)$ corresponds to the skew-symmetric derivations of this Lie algebra, and $\g_\tau=\stab(\tau)$ follows from the fact that all derivations of a semisimple Lie algebra are inner.
\end{rem}

\section{PSCT almost Hermitian structures}
\label{sec:almHerm}

In this section we study PSCT geometries for which the connection with torsion parallelizes an almost Hermitian structure. First, we state an algebraic lemma that we will use in this and the following section.

\begin{lem}
\label{lem:reducedstructure}
Let $\h$ be a compact Lie algebra, $V$ a nontrivial irreducible real representation of $\h$, and $r\in\N$ a non-negative integer. Consider the sum $V\oplus\R^r$ of $V$ with a trivial representation.
\begin{enumerate}[\upshape(i)]
    \item If $\dim(V\oplus\R^r)=2n$ for some $n\ge 1$ and $\h$ preserves an orthogonal complex structure (i.e.~a $\U(n)$-structure) on $V\oplus\R^r$, then it preserves one on $V$.
    \item If $\dim(V\oplus\R^r)=4n$ with $n\geq2$, $r\neq0$, and $\h$ preserves an orthogonal quaternionic structure (i.e.~an $\Sp(n)\Sp(1)$-structure) on $V\oplus\R^r$, then $\dim(V)=4q$ for some $q\in\N$ and $\h$ preserves an orthogonal hypercomplex structure (i.e.~an $\Sp(q)$-structure) on $V$.
    \item The adjoint representation of a compact simple Lie algebra $\h$ does not carry an $\h$-invariant $\U(n)$- or $\Sp(n)\Sp(1)$-structure.
\end{enumerate}
\end{lem}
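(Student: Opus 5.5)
The approach is to work with the commutant. Let $W:=V\oplus\R^r$. The representation $V$ is nontrivial and irreducible, and $\R^r$ is trivial, so they share no isotypic component. Hence every $\h$-equivariant endomorphism of $W$ preserves the decomposition, and
\[\End_\h(W)=\End_\h(V)\oplus\End(\R^r).\]
By Schur's lemma for real representations, $\End_\h(V)$ is one of $\R$, $\C$ or $\H$. All structures we deal with will be described through $\h$-equivariant orthogonal endomorphisms, so we can study each block separately.

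\textbf{(i).} An $\h$-invariant orthogonal complex structure $J$ on $W$ commutes with $\h$. By the splitting above, $J$ preserves $V$, and $J|_V$ is an orthogonal complex structure on $V$ commuting with $\h$. Since $V$ is nontrivial it is nonzero, so $J|_V$ gives the required $\U$-structure on $V$. The same argument shows $r$ is even.

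\textbf{(ii).} An $\Sp(n)\Sp(1)$-structure is a rank-3 subbundle $Q\subset\so(W)$ spanned by $I,J,K$ satisfying the quaternion relations. Invariance of the structure under $\h$ means $[\h,Q]\subset Q$. Because $\h$ is compact, this gives a homomorphism $\rho:\h\to\so(Q)\cong\sp(1)$. Then $\h$ lies in the normalizer $\sp(n)\oplus\sp(1)$, and the $\sp(1)$-component of $\h$ is $\rho(\h)$.

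\emph{Case $\rho=0$.} Then $I,J,K$ are $\h$-equivariant. By the splitting they preserve $V$, so $V$ is $\H$-linear, $\dim V=4q$, and $\h$ preserves the hypercomplex structure $I|_V,J|_V,K|_V$. This is the desired $\Sp(q)$-structure.

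\emph{Case $\rho\neq0$.} The image $\rho(\h)$ is either $\u(1)$ or all of $\sp(1)$. Write an element of $\h$ as $A+\rho(A)$ with $A\in\sp(n)$, where $\rho(A)$ acts on $W$ as left multiplication by an imaginary quaternion. The trivial summand $\R^r\neq0$ must be annihilated by every $A+\rho(A)$. Choose $0\neq w\in\R^r$. Then $A w=-\rho(A)w$ for all $A\in\h$, and the trivial summand $\R^r$ is invariant under the $\h$-action.

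The plan is to show that $\H w$, or at least $\R^r+\Q\R^r$, is an $\h$-invariant subspace meeting $V$ nontrivially. Concretely, $\h$ commutes with $\H$ only up to the $\rho$-twist, so $Q\cdot\R^r$ is $\h$-invariant: for $X\in\h$ and $I\in Q$,
\[X(Iw)=[X,I]w+IXw=\rho(X)(I)\,w\in Qw.\]
So $Qw$ is a subrepresentation, on which $\h$ acts through $\rho$ via $\so(3)$ on $\spann\{Iw,Jw,Kw\}$. Its projection to $V$ must then be $0$ or all of $V$, so $\dim V\le 3$, or else $Qw\subset\R^r$.

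If $Qw\subset\R^r$ for all $w$, then $\R^r$ is $Q$-invariant, and hence so is $V=(\R^r)^\perp$. On $\R^r$ the action of $\h$ then forces $[\h,Q]$ to act trivially on $\R^r$. That makes $\rho(\h)$ vanish on $\R^r$, but $\rho(\h)$ acts by quaternion multiplication, which is injective, so $\rho=0$, a contradiction.

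Otherwise $V\cong Qw$ has dimension $\le3$, and with $n\ge2$ we have $r\ge5$. There the trivial part is large, and I would choose $w$ orthogonal to the finitely many constraints to reach the first alternative again. This last dimension-counting step is the main obstacle and needs care.

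\textbf{(iii).} For the adjoint representation, $\End_\h(\h)=\R$ since it is of real type (the complexification of a simple compact $\h$ is irreducible). So there is no invariant $J$, which excludes $\U(n)$. For $\Sp(n)\Sp(1)$, run the argument of (ii) with $r=0$. If $\rho=0$ we would have $\End_\h(\h)\supset\H$, which is impossible. If $\rho\neq0$, the ideal $\ker\rho$ is $0$ because $\h$ is simple, so $\h\cong\su(2)$ with $\dim\h=3$. Then $\dim\h$ is not divisible by $4$, a contradiction.
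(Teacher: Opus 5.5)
You prove (i) the same way the paper does. Your (iii) is correct and takes a more self-contained route than the paper: you use that the adjoint representation is of real type, and that $\ker\rho=0$ forces $\h\hookrightarrow\sp(1)$, so $\h\cong\su(2)$ with $\dim\h=3$. This avoids the paper's appeal to the irreducibility of the complement of $\h$ in $\so(\h)$.

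\textbf{The gap is in (ii).} The case $\rho\neq0$, $\dim V\le 3$ is left open; you flag it yourself as the main obstacle. This case is the heart of (ii), not a technicality. Nothing you have written uses $n\geq 2$, and without that hypothesis the statement is false. For $n=1$, take $\su(2)$ acting by $\so(3)$ on the imaginary quaternions. It preserves the quaternionic structure $\Lambda^2_+$ on $\H=\R^3\oplus\R$, yet $\dim V=3$ is not a multiple of $4$. The paper spends most of its proof of (ii) on exactly this case. For $\dim V=3$ it lists the $3$-dimensional invariant subspaces $\{\alpha\ad(X)+X\wedge v\}$ of $\so(\h\oplus\R^r)$. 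For $\dim V=2$ it uses a weight argument in $\sp(n)\oplus\sp(1)$. Your sketched remedy, choosing $w$ with $Qw\subset\R^r$ by counting constraints, does not work as stated. That condition imposes up to $3\dim V=9$ linear conditions on $w\in\R^r$, while $r$ can be as small as $5$.

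\textbf{How to close it.} Use a single element of $Q$ instead of all three.
\begin{enumerate}[\upshape(1)]
\item Since $\rho\neq0$, choose $X\in\h$ and $I\in Q$ with $[X,I]\neq0$. Every nonzero element of $Q$ squares to a negative multiple of the identity, so $[X,I]$ is invertible.
\item For $w\in\R^r$ you have $Xw=0$, hence $[X,I]w=X(Iw)=X\,\pr_V(Iw)$.
\item Because $\dim V\le 3$ and $r=4n-\dim V\geq 5$, the linear map $\R^r\to V$, $w\mapsto\pr_V(Iw)$, has a nonzero kernel vector $w$.
\item For that $w$ you get $[X,I]w=0$, contradicting invertibility. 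This is exactly where $n\geq2$ enters.
\end{enumerate}
The same computation shows that in your other subcase a single $w\neq0$ with $Qw\subset\R^r$ already forces $\rho=0$. The precise reason is invertibility of nonzero elements of $Q$, not injectivity of quaternion multiplication. With this step added, your proof of (ii) is complete. It also treats $\dim V=2$ and $\dim V=3$ uniformly, which is more economical than the paper's case-by-case analysis.
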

\begin{proof}
\begin{enumerate}[\upshape(i)]
\item An orthogonal complex structure on $V\oplus\R^r$ is an invertible endomorphism contained in
\[\so(V\oplus\R^r)\cong\so(V)\oplus\so(\R^r)\oplus(V\otimes\R^r).\]
Since $V$ is nontrivial and irreducible, the space of invariant elements is
\[\so(V\oplus\R^r)^\h=\so(V)^\h\oplus\so(\R^r).\]
This contains an invertible endomorphism if and only if $\so(V)^\h\neq0$; in this case, every invariant complex structure preserves the decomposition $V\oplus\R^r$ and hence restricts to an invariant complex structure on $V$.
\item Assume first that $\dim V\geq4$. Again, $V$ being irreducible and nontrivial, any three-dimensional invariant subspace of $\so(V\oplus\R^r)$ must be contained in $\so(V)\oplus\so(\R^r)$. If this invariant subspace is spanned by complex structures, which are invertible, and if $r\neq0$ then $\h$ must be acting trivially on it, since it must have nonzero projections to both $\so(V)$ and the trivial $\so(\R^r)$. The projection to $\so(V)$ must then be a three-dimensional subspace of $\h$-invariant complex structures on $V$, with a basis satisfying the quaternionic relations, i.e.~a hypercomplex structure on $V$.

If $\dim V=3$ (in which case $\h\cong V\cong\su(2)$), then apart from $\so(\R^r)$, which does not contain invertible elements, the only possible three-dimensional invariant subspaces of $\so(\h\oplus\R^r)$ are of the form
\[\{\alpha\ad(X)+X\wedge v\,|\,X\in\h\}\]
for $\alpha\in\R$ and $v\in\R^r$. Moreover, $r\geq5$, since by assumption, $\dim(\h\oplus\R^r)\geq8$. Then all elements of the above subspace are not invertible since their kernel contains the orthogonal complement of $v$ in $\R^r$, which is at least 4-dimensional. Thus $\h$ does not preserve any orthogonal quaternionic structure on $V\oplus\R^r$ when $\dim V=3$.

Finally, assume $\dim V=2$. Then necessarily $\h\cong\u(1)$. An invariant orthogonal quaternionic structure on $V\oplus\R^r\cong\R^{4n}$ then gives rise to an injective homomorphism $\h\hookrightarrow\sp(n)\oplus\sp(1)$. With respect to some Cartan subalgebra $\t$ of $\sp(n)\oplus\sp(1)$ containing the image of this homomorphism, the weights of the standard representation $\C^{2n}=\R^{4n}$ of $\sp(n)\oplus\sp(1)$ are
\[e_1\pm e_{n+1},\ e_2\pm e_{n+1},\ \ldots,\ e_n\pm e_{n+1}\]
for a suitable basis $(e_1,\ldots,e_{n+1})$ of the weight lattice. Since $\R^{4n}=V\oplus\R^r$ as a $\h$-representation, this means that $\h$ is mapped to a one-dimensional subspace of $\t$ annihilating all but one of the weights above. But this is impossible for $n\geq2$, so again, $\h$ does not preserve any orthogonal quaternionic structure on $V\oplus\R^r$ when $\dim V=2$.
\item If $\h$ is simple and $V=\h$, then $\so(\h)$ further decomposes into $\h\oplus\m$ with $\m$ irreducible (see e.g.~\cite[Prop.~7.49]{besse}). Note that $\dim\m=(\dim\h)(\dim\h-3)/2$, so in particular $\dim\m\neq1,3$, thus $\h$ cannot admit an invariant complex or quaternionic structure.
\end{enumerate}
\end{proof}

We are now ready for stating the general structure theorem of almost Hermitian PSCT geometries.

\begin{thm}
\label{thm:complex}
Let $(M,g,\tau)$ be a PSCT geometry parallelizing an almost Hermitian structure $J$, i.e.~$\nabla^\tau J=0$. Then $(M,g,\tau)$ is locally isomorphic to a Riemannian product of the form
\[(H,g_H,\tau_H)\times\prod_{i=1}^s(S_i,g_i,\alpha_i\vol_i)\times (N,g_N,0),\]
where
\begin{itemize}
 \item $H$ is a Lie group with a bi-invariant metric $g_H$, compact Lie algebra $\h$, canonical three-form $\sigma$ and torsion form $\tau_H=-\frac12\sigma$, so that $\hol(\nabla^{\tau_H})=0$,
 \item for each $i$, $(S_i,g_i,\xi_i,\omega_i)$ are three-dimensional, non-round $\alpha_i$-Sasaki manifolds for some $\alpha_i\neq0$, with Reeb vector fields $\xi_i$, volume forms $\vol_i=\xi_i\wedge\omega_i$, and $\hol(\nabla^{\alpha_i\vol_i})=\u(1)$,
 \item $(N,g_N)$ is a Kähler manifold.
\end{itemize}
$J$ is determined by choosing transverse complex structures on $\xi_i^\perp\subset TS_i$, an almost complex structure on $\h\oplus\spann\{\xi_1,\ldots,\xi_r\}$, and the Kähler structure on $N$. If $(M,g)$ is complete and simply connected, the statement holds globally.
\end{thm}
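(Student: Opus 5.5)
Start from Corollary~\ref{corprod}: locally $(M,g,\tau)=\prod_{i=1}^r(M_i,g_i,\tau_i)\times(\R^k,0)\times\prod_j(N_j,h_j,0)$, with $\hol(\nabla^\tau)$ block diagonal. Since $\nabla^\tau J=0$, $J$ lies in the centralizer of $\hol(\nabla^\tau)$ in $\so(TM)$. The plan is to find which factors carry nontrivial holonomy, show that $J$ preserves their tangent spaces, and use Lemma~\ref{lem:reducedstructure} to constrain each factor.

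\emph{Step 1 (factors with irreducible nontrivial holonomy).} Let $T_a$ be the tangent space of a factor with $\hol_a:=\hol(\nabla^{\tau_a})\neq0$ acting irreducibly on $T_a$. This covers: the $N_j$ (de Rham irreducible, non-flat); the Lie group factors of type (i)--(iii) with $\hol=\h$ acting by the adjoint representation; and the generic three-dimensional factors with $\hol=\so(3)$. The other factors give trivial representations of $\hol_a$. Apply Lemma~\ref{lem:reducedstructure}(i) with $V=T_a$, compact Lie algebra $\hol_a$ and $\R^r$ the sum of the other tangent spaces. Its proof shows more: every invariant orthogonal complex structure preserves $T_a$. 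Hence $J$ preserves $T_a$ and restricts to an $\hol_a$-invariant complex structure on it. This rules out the adjoint representation by Lemma~\ref{lem:reducedstructure}(iii), so there are no factors of type (ii), (iii), or (i) with $\hol=\h$. It also rules out $\so(3)$ on $\R^3$, which is odd-dimensional. For the $N_j$, $J|_{TN_j}$ is $\nabla^{h_j}$-parallel, hence Kähler.

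\emph{Step 2 (factors with smaller holonomy).} The remaining torsion factors are:
\begin{itemize}
\item simple compact groups with a $(\pm)$-connection, i.e.\ flat; using the inverse map we may arrange $\tau_i=-\frac12\sigma_i$ for each of them;
\item round $S^3$ factors, which are the same thing (Remark~\ref{overlap});
\item non-round $\alpha_i$-Sasaki three-manifolds with $\hol=\u(1)$ acting on $\xi_i^\perp$ and fixing $\xi_i$.
\end{itemize}
Group the flat simple factors together with $\R^k$ into $H$, a Lie group with compact Lie algebra $\h$ and flat torsion $-\frac12\sigma$. Then $\hol(\nabla^\tau)=\bigoplus_i\u(1)_i\oplus\bigoplus_j\hol(h_j)$, acting trivially on $\h\oplus\spann\{\xi_i\}$. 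For each $i$, $\u(1)_i$ acts by a nontrivial irreducible two-dimensional representation on $\xi_i^\perp$ and trivially elsewhere. Applying Lemma~\ref{lem:reducedstructure}(i) once more (the generator of $\u(1)_i$ has a nontrivial invariant part only on $\xi_i^\perp$), $J$ preserves each $\xi_i^\perp$ and acts there as $\pm$ the transverse complex structure $\omega_i$. Consequently $J$ preserves the orthogonal complement $\h\oplus\spann\{\xi_i\}$, on which holonomy is trivial. So $J$ restricted to it is an arbitrary constant (i.e.\ $\nabla^\tau$-parallel) orthogonal complex structure. Conversely, any such choice is $\nabla^\tau$-parallel, which gives the description of $J$.

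The main obstacle is the bookkeeping in Step 1 when several factors share isomorphic holonomy representations, e.g.\ two $N_j$ or two $\u(1)_i$. There the Lemma is applied with $\h=\hol_a$ alone (acting trivially on the rest), not with the full holonomy algebra; this is what keeps the argument valid. The non-round hypothesis excludes the overlap with round $S^3$, and the global statement follows from the global part of Corollary~\ref{corprod}.
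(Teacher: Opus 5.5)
Your proposal is correct and follows essentially the same route as the paper. You decompose via Corollary~\ref{corprod}, apply Lemma~\ref{lem:reducedstructure}(i) and (iii) to each holonomy summand to exclude adjoint-type factors and to force $J$ to preserve $\xi_i^\perp$ and each $TN_j$, and then absorb the flat Lie group factors (normalized to the $(-)$-connection) together with $\R^k$ into $H$. Your remark that the lemma is applied to each $\hol_a$ alone, acting trivially on the other factors, and your separate treatment of the $\so(3)$ case by parity, only make explicit what the paper leaves implicit.
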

\begin{proof}
By Corollary~\ref{corprod}, $(M,g,\tau)$ can be locally decomposed as in \eqref{prod}. Suppose $(M,g,\tau)$ has a local irreducible factor $(M_i,g_i,\tau_i)$ as in Theorem~\ref{thm:irred}, and assume that $\hol(\nabla^{\tau_i})\neq0$. Then either $\hol(\nabla^{\tau_i})$ is a compact simple Lie algebra whose holonomy representation $T_pM_i$ is the adjoint representation, or $\hol(\nabla^{\tau_i})=\u(1)$ and $(M_i,g_i,\tau_i)$ is a non-round $\alpha_i$-Sasaki manifold for some $\alpha_i\in\R^\times$.

The first case cannot occur, since by Lemma~\ref{lem:reducedstructure}, $\hol(\nabla^{\tau_i})$ does not preserve a $\U(n)$-structure on $T_pM$. For the three-dimensional $\alpha_i$-Sasaki factors, we have $T_pM_i=\R\xi_i\oplus\xi_i^\perp$, and the almost complex structure $J$ must preserve the subspace $\xi_i^\perp$ in order to be $\hol(\nabla^{\tau_i})$-invariant.

Lastly, if $\hol(\nabla^{\tau_i})=0$, then $(M_i,g_i,\tau_i)$ belongs to case (i) of Theorem~\ref{thm:irred} (see also Remark~\ref{overlap}), with $\nabla^{\tau_i}$ being one of the $(\pm)$-connections; since they are related by an isometry, we can assume it is the $(-)$-connection.

Consider now the Riemannian factors $(N_j,h_j)$ in~\eqref{prod}. Applying Lemma~\ref{lem:reducedstructure} for the representation of $\hol(\nabla^{h_j})$ on $T_pM$ shows that $(N_j,h_j)$ are irreducible Kähler manifolds.

The statement now follows by defining $H$ as the product of all factors of type (i) together with the flat factor, and $N$ as the product of the $N_j$.
\end{proof}

Almost Hermitian structures with skew-symmetric torsion are exactly those of Gray--Hervella type $\W_1\oplus\W_3\oplus\W_4$ \cite{GH}. As mentioned in the introduction, the connection with skew-symmetric torsion preserving the almost Hermitian structure is unique. We now investigate the possible torsion classes that an PSCT almost Hermitian structure can have.

The case where $J$ is integrable (class $\W_3\oplus\W_4$) has been investigated in \cite{BPT}. An integrable PSCT Hermitian structure is also called \emph{Bismut Kähler-like} since the curvature $R^\tau$ of its Bismut connection has the same symmetries as the Riemannian curvature tensor of a Kähler metric. Every PSCT Hermitian manifold is as in Theorem~\ref{thm:complex}, together with the additional constraint that there exists a Cartan subalgebra $\t\subset\h$ such that $J$ preserves the distribution spanned by $\t\oplus\spann\{\xi_1,\ldots,\xi_r\}$ and projects to an invariant complex structure on the full flag manifold $H/T$, where $T=\exp\t$. 

Before we turn to the other torsion classes, we note that the Gray--Hervella classification degenerates in dimension four; we study this case first.

\begin{prop}
Let $(M,g,\tau)$ be a $4$-dimensional geometry with parallel skew-sym\-metric torsion, $\tau\neq0$. Then $\nabla^\tau$ parallelizes an almost Hermitian structure $J$ if and only if $(M,g,J)$ is a Vaisman manifold.
\end{prop}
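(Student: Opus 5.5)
Throughout, set $\theta:=\hodge\tau$. By Example~\ref{4dim}, $\theta$ is a nonzero $\nabla^g$- and $\nabla^\tau$-parallel vector field (nonzero because $\tau\neq0$). The plan is to identify $\tau$, up to a nonzero constant, with the Hodge dual of the Lee form of $(g,J)$. Once this is done, the Vaisman condition (locally conformally Kähler with $\nabla^g$-parallel Lee form) becomes equivalent to the parallelism of $\hodge\tau$.

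For the forward direction, assume $\nabla^\tau J=0$. Then $J$ is an almost Hermitian structure with skew-symmetric torsion, so it is of Gray--Hervella type $\W_1\oplus\W_3\oplus\W_4$. In real dimension $4$ the classes $\W_1$ and $\W_3$ are zero, so $J$ is of class $\W_4$. In particular $J$ is integrable and $d\omega=\vartheta\wedge\omega$, where $\vartheta$ is the Lee form, and $\nabla^\tau$ is the Bismut connection with $2\tau=-d^c\omega=-J\,d\omega$.

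The one real computation is the standard four-dimensional identity
\[ J(\vartheta\wedge\omega)=\pm\hodge\vartheta ,\]
which I would check in an adapted orthonormal basis $e_1,Je_1,e_2,Je_2$ with $\vartheta$ a multiple of $e_1$. It yields $\tau=c\hodge\vartheta$ with $c$ a universal nonzero constant (sign conventions aside). Hence $\vartheta$ is a nonzero constant multiple of $\theta=\hodge\tau$, and so $\vartheta$ is $\nabla^g$-parallel. In particular $d\vartheta=0$, so $(M,g,J)$ is locally conformally Kähler with parallel Lee form, i.e.~Vaisman.

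For the converse, let $(M,g,J)$ be Vaisman. Being Hermitian, it carries the Bismut connection $\nabla^\tau$, which satisfies $\nabla^\tau J=0$, and by the identity above $\tau=c\hodge\vartheta$. It remains to show $\nabla^\tau\tau=0$. Since the Hodge star commutes with any metric connection, it suffices to show $\nabla^\tau\vartheta=0$. Now
\[ \nabla^\tau_X\vartheta=\nabla^g_X\vartheta+\tau_X\vartheta=\tau_X\vartheta ,\]
because $\vartheta$ is $\nabla^g$-parallel. Moreover
\[ g(\tau_X\vartheta,Y)=\tau(X,\vartheta,Y)=\pm c\,(\vartheta\intprod\hodge\vartheta)(X,Y)=\pm c\hodge(\vartheta\wedge\vartheta)(X,Y)=0 , \]
exactly as in Example~\ref{4dim}. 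Hence $\tau$ is $\nabla^\tau$-parallel, and since it is nonzero ($\vartheta\neq0$ for a Vaisman, non-Kähler structure) the converse follows.

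The main obstacle is the bookkeeping in the identity $J(\vartheta\wedge\omega)=\pm\hodge\vartheta$ and in the normalisation of $\tau$ relative to $d^c\omega$. I would settle this in the explicit basis rather than through general theory; only the fact that the constant is nonzero actually matters.
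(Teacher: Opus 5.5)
Your proof is correct, but it reaches the key fact (that $\hodge\tau$ is a fixed nonzero multiple of the Lee form) by a different route than the paper.

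\textbf{The paper's route.} It uses neither the Gray--Hervella classification, nor integrability, nor the Bismut formula. Writing $\tau=-\hodge\theta$, it computes directly
$(\tau_X)_\ast J=-\hodge(JX\wedge\theta+X\wedge J\theta)=-(JX\wedge\theta+X\wedge J\theta)$.
This uses only that forms of type $(2,0)+(0,2)$ are self-dual in dimension $4$. Hence, for the \emph{given} $\tau$, $\nabla^\tau J=0$ is equivalent to $\nabla^g_XJ=JX\wedge\theta+X\wedge J\theta$. That is exactly the lcK condition with Lee form $\theta$, and Example~\ref{4dim} supplies the parallelism of $\theta$. One pointwise equivalence thus handles both directions and ties the Lee form to the prescribed torsion.

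\textbf{Your forward direction.} You instead import several external facts: the vanishing of $\W_1,\W_3$ in dimension $4$, uniqueness of the characteristic connection, $2\tau=-d^c\omega$, and the identity $J(\vartheta\wedge\omega)=\pm\hodge\vartheta$. This is less self-contained, but it makes the identification of $\nabla^\tau$ with the Bismut connection explicit. You also rightly obtain $d\vartheta=0$ from parallelism. That step is genuinely needed, since in dimension $4$ the relation $d\omega=\vartheta\wedge\omega$ holds for every Hermitian structure.

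\textbf{Your converse.} You start from a Vaisman structure and \emph{construct} $\tau$ as its Bismut torsion, rather than using the $\tau$ fixed in the statement. This is the only sensible reading. For a fixed $\tau$ the literal converse fails: $2\tau$ is still $\nabla^{2\tau}$-parallel, while $\nabla^{2\tau}_XJ=(\tau_X)_\ast J$ cannot vanish for all $X$ by Lemma~\ref{lem:uniqueskew}(i). You should nevertheless say this explicitly. The converse means that every non-Kähler Vaisman manifold arises in this way, with $\tau$ its Bismut torsion. Equivalently, by uniqueness, a Vaisman $J$ is $\nabla^\tau$-parallel exactly when its Lee form is the appropriate multiple of $\hodge\tau$, which is the form the paper's computation gives directly.
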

\begin{proof}
First, recall that $(M,g,\tau)$ is PSCT (Example~\ref{4dim}), and let $\theta:=\hodge\tau$. If $J$ is an almost Hermitian structure, we have for every $X\in TM$
\begin{align*}
(\tau_X)_\ast J&=-[X\intprod\hodge\theta,J]=[\hodge(X\wedge\theta),J]=-J_\ast(\hodge(X\wedge\theta))\\
&=-\hodge(JX\wedge \theta+X\wedge J\theta)=-(JX\wedge\theta+X\wedge J\theta),
\end{align*}
since $(0,2)+(2,0)$ forms are self-dual in dimension 4. Therefore the condition $\nabla^\tau J=0$ is equivalent to 
\[\nabla^g_XJ=JX\wedge \theta+X\wedge J\theta,\qquad\forall\ X\in TM,\]
which means that $(M,g,J)$ is locally conformally Kähler with Lee form $\theta$. Since moreover $\nabla^g\theta=\nabla^\tau\theta=0$, the statement follows.
\end{proof}

From now on we will thus restrict our attention to the case $\dim(M)\geq6$.

\begin{thm}
\label{thm:W1W3}
Every Riemannian manifold $(M^n,g)$ with $n\geq 6$ which admits a PSCT almost Hermitian structure of class $\W_1\oplus\W_3$ (skew-symmetric torsion and semi-Kähler) is locally isomorphic to the product of a Kähler manifold with an even-dimensional Lie group whose Lie algebra is compact and different from $\su(2)\oplus\R$. Conversely, every such Riemannian product admits a PSCT almost Hermitian structure of class $\W_1\oplus\W_3$.
\end{thm}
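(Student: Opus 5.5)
We start from Theorem~\ref{thm:complex}: locally $(M,g,\tau)=(H,g_H,-\tfrac12\sigma)\times\prod_i(S_i,g_i,\alpha_i\vol_i)\times(N,g_N,0)$, with $J$ preserving each $\xi_i^\perp$, acting as the Kähler structure on $N$, and acting as an arbitrary (constant, since $\nabla^\tau$-parallel and $\hol=0$ there) orthogonal complex structure $J_0$ on $\h\oplus\spann\{\xi_i\}$.

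\textbf{Translating the class.} The class $\W_1\oplus\W_3$ means $\delta\omega=0$, i.e.\ the Lee form vanishes. For a connection with skew torsion parallelizing $J$, the Lee form is (up to a constant) the contraction $\theta(X)=\sum_j\tau(JX,e_j,Je_j)$, i.e.\ $\theta=J\circ(\omega\intprod\tau)$. So the semi-Kähler condition is the pointwise algebraic condition $\omega\intprod\tau=0$, where $\omega\intprod\tau\in V$ is the vector $\sum_{j}\tau(e_j,Je_j,\cdot)$. Since $\tau$ splits as a sum over the factors, and $\omega$ restricted to $TS_i$ has the block $\xi_i^\perp$ plus mixed parts with $\h\oplus\spann\{\xi_k\}$, we compute the contraction factorwise.

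\textbf{Killing the Sasaki factors.} On $S_i$, $\tau_i=\alpha_i\xi_i\wedge\omega_i$, and $J|_{\xi_i^\perp}=\pm\omega_i$, so $\omega\intprod\tau_i=\pm2\alpha_i\xi_i\neq0$. The $H$-contribution $\omega\intprod\sigma=\sum_j[e_j,Je_j]$ lies in $\h$, orthogonal to the $\xi_i$. Hence the $\xi_i$-components cannot cancel, giving $s=0$. We are left with $M=H\times N$ and $J=J_0\oplus J_N$. Here $H$ is even-dimensional, and the condition is
\[\sum_j[e_j,J_0e_j]=0,\]
equivalently $\sigma(\cdot,\cdot,\cdot)$ contracted with $\omega_0$ vanishes, i.e.\ $\omega_0\perp[\h,\h]$-image in the sense that $\langle\omega_0,\ad X\rangle=0$ for all $X\in\h$. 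Thus the problem reduces to this: \emph{a compact even-dimensional Lie algebra $\h$ admits an orthogonal complex structure $J_0$ with $J_0$ orthogonal to $\ad(\h)\subset\so(\h)$ iff $\h\neq\su(2)\oplus\R$.} (Abelian $\h$ is included as the flat factor.)

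\textbf{The algebraic core and the main obstacle.} For $\su(2)\oplus\R$ we check directly: every orthogonal complex structure is $J_0=\ad(X)/|\cdot|\pm\ldots$; concretely $\omega_0=\hodge_3 u+u\wedge e_4$ type forms, i.e.\ $\omega_0=\pm(u^\flat\wedge e_4)+\hodge_{\su(2)}u$ with $|u|=1$. Its pairing with $\ad(u)=\hodge u$ is nonzero, so this case fails. For the existence in all other cases I expect to use the combinatorial results of \cite{roots}. First I would try to decompose $\h=\bigoplus\h_k\oplus\R^m$ and build $J_0$ blockwise, using root-space complex structures on $\h_k/\t_k$ plus a pairing of the toral parts. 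Requiring $\langle\omega_0,\ad X\rangle=0$ for $X\in\t$ amounts to the condition $\sum_{\alpha\in\Phi^+}\epsilon_\alpha\alpha=0$ for a choice of signs on positive roots (the root part), together with zero toral contribution (pairing $\t\oplus\R^m$ among itself, which contributes nothing as $\t$ is abelian). For $X$ off $\t$ the pairing vanishes by weight considerations. So everything reduces to finding sign vectors with $\sum\epsilon_\alpha\alpha=0$, which is exactly the sort of statement in \cite{roots}; it fails for $A_1$ (a single root). That is why $\su(2)$ factors need pairing with other pieces: $\su(2)\oplus\su(2)$, or $\su(2)$ combined with another simple factor's roots, allows cancellation. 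The obstruction for $\su(2)\oplus\R$ is exactly that no partner exists. The main difficulty is this existence step, especially handling odd-rank factors and $\su(2)$ summands, where mixing the Cartan parts across factors is needed. The converse direction (the construction) gives $d\tau=0$ and $\nabla^\tau J=0$ automatically, because $J_0$ is left-invariant and parallel for the $(-)$-connection.
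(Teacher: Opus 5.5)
Your reduction and necessity direction are correct and agree with the paper. The semi-Kähler condition becomes $\omega\intprod\tau=0$. The $\xi_i$-components $\pm\alpha_i\xi_i$ cannot be cancelled by the $\h$-valued term $\omega_0\intprod\sigma$. Your explicit check for $\su(2)\oplus\R$ replaces the paper's remark that $\omega_0\intprod\colon\Lambda^3\h\to\h$ is an isomorphism in dimension $4$.

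The gap is the converse, which is most of the work. With your ansatz $J_0=J_\t+\sum_{\alpha\in R^+}\epsilon_\alpha J_\alpha$ one gets $J_0\intprod\sigma=\sum_{\alpha\in R^+}\epsilon_\alpha\alpha$. Roots of distinct simple ideals lie in mutually orthogonal subspaces of $\t^*$, so this vanishes only if the positive roots of \emph{each} simple ideal are balanced on their own. That fails for many ideals:
\begin{itemize}
\item in $\su(2k)$ each $e_i$ occurs in exactly $2k-1$ positive roots;
\item in $\so(2n+1)$ each $e_i$ occurs in exactly $2n-1$ positive roots.
\end{itemize}
In both cases the $e_i$-coefficient of $\sum_\alpha\epsilon_\alpha\alpha$ is a sum of an odd number of terms $\pm1$, so it cannot vanish. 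Hence $\su(2)\oplus\su(2)$, $\su(2)\oplus\R^3$, $\so(5)$, $\su(4)\oplus\R$, $\so(7)\oplus\R$, \dots\ are not reached by your construction. Your remark that an $\su(2)$-root can be cancelled by roots of another simple factor is wrong for the same orthogonality reason. So ``finding sign vectors with $\sum\epsilon_\alpha\alpha=0$'' is only a sufficient condition, and it is not met in general.

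The missing idea is to let $J$ move root spaces rather than only rotate them:
\begin{itemize}
\item Choose $S\subset R^+$ balanced, with $R^+\setminus S$ strongly orthogonal.
\item Put $s_\alpha J_\alpha$ on $\h_\alpha$ for $\alpha\in S$.
\item On $\bigoplus_{\alpha\notin S}\h_\alpha$ take any $J'$ with $J'\h_\alpha\perp\h_\alpha$, which is possible when $|R^+\setminus S|\neq1$.
\end{itemize}
Then $J'$ has no component in any $\Lambda^2\h_\alpha$, and $[\h_\alpha,\h_\beta]=0$ for distinct $\alpha,\beta\notin S$, so $J'\intprod\sigma=0$. For $\su(2)\oplus\su(2)$ this is just $J'\h_{\alpha_1}=\h_{\alpha_2}$ with $S=\emptyset$.

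The existence of such an $S$ is the combinatorial input from \cite{roots}. It still fails when $\h$ is a product of copies of $\R$ and $\su(2k+1)$ with exactly one $\so(3)$ or $\so(5)$ factor, and these two families need separate constructions:
\begin{itemize}
\item For $\so(3)$, let $J$ map $\so(3)$ onto a $3$-dimensional subspace of $\t$ orthogonal to (hence commuting with) $\so(3)$. This needs $\dim\t\geq4$, which is exactly where $\h\neq\su(2)\oplus\R$ is used.
\item For $\so(5)$ no root-adapted choice works. One needs an explicit complex structure on $\t^\perp\subset\so(5)$ mixing the root spaces, such as the paper's $x^{17}+x^{26}+x^{38}+x^{45}$.
\end{itemize}
Without these ingredients the ``conversely'' part of the statement remains unproved.
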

\begin{proof}
The class $\W_1\oplus\W_3$ consists of almost Hermitian structures $(g,J,\omega)$ which are parallel with respect to a connection $\nabla^\tau=\nabla^g+\tau$ with skew-symmetric torsion, and which are in addition \emph{semi-Kähler}. This last condition means $d^\ast\omega=0$, or equivalently $\omega\intprod\tau=0$. Indeed, since $\omega$ is $\nabla^\tau$-parallel, we have
\begin{equation}
d^\ast\omega=-\sum_ie_i\intprod\nabla^g_{e_i}\omega=\sum_ie_i\intprod(\tau_{e_i})_\ast\omega=\sum_i[\tau_{e_i},J]e_i=\sum_i\tau_{e_i}Je_i=2\omega\intprod\tau.
\label{codiff}
\end{equation}
Assume now that $(M,g,\tau)$ is PSCT. By Theorem~\ref{thm:complex}, the torsion form is given by 
\[\tau=-\frac12\sigma+\sum_j\alpha_j\vol_j,\]
where $\sigma$ is the canonical three-form of the Lie group factor $H$ and $\vol_j$ is the volume form of the $\alpha_j$-Sasaki factor $S_j$. The almost complex structure $J$ preserves $T_0:=\h\oplus\spann\{\xi_j\}_j$ and each of the $\xi_j^\perp\subset TS_j$. Then
\[\omega\intprod\tau=-\frac12\omega_0\intprod\sigma+\sum_j\alpha_j\xi_j,\]
where $\omega_0$ is the fundamental $2$-form of $J|_{T_0}$. Since each $\alpha_i$ is non-zero, $\omega\intprod\tau$ vanishes if and only if there are no $\alpha$-Sasaki factors and $\omega_0\intprod\sigma=0$.

Thus, Theorem~\ref{thm:complex} shows that if $(M,g,J)$ is a PSCT almost Hermitian structure of class $\W_1\oplus\W_3$, then $(M,g)$ is locally the Riemannian product of a Kähler manifold with a Lie group $H$ (with compact Lie algebra $\h$), carrying a bi-invariant metric and a left-invariant almost Hermitian structure $J_0$ (since $J_0$ is parallel for the $(-)$-connection) satisfying $\omega_0\intprod\sigma=0$. Since in dimension 4 the Lefschetz operator $\omega_0\intprod:\Lambda^3\h\to\h$ of any almost Hermitian form $\omega_0$ is an isomorphism, and the canonical three-form $\sigma$ of $\su(2)\oplus\R$ is non-zero, this last condition shows that $\h$ is different from $\su(2)\oplus\R$.

We will show that, conversely, every even-dimensional Riemannian Lie group $(H,g)$ whose Lie algebra $\h$ is compact and different from $\su(2)\oplus\R$ admits a left-invariant PSCT almost Hermitian structure of class $\W_1\oplus\W_3$, in other words, an almost Hermitian structure $J$ on $\h$ such that $\omega\intprod\sigma=0$. Fix a Cartan subalgebra $\t\subset\h$ and a Weyl chamber. Let $R^+$ denote the set of positive roots with respect to this Weyl chamber, and let $\h_\alpha$ denote the real root spaces of $\h$, so that $\h=\t\oplus\bigoplus_{\alpha>0}\h_\alpha$.  We then have for each root $\alpha$
\[[t,X]=\alpha(t)J_\alpha X\]
for all $t\in\t$, $X\in\h_\alpha$, where $J_\alpha$ is a complex structure on $\h_\alpha$ compatible with the metric $g$. The canonical three-form can therefore be written as
\[\sigma=\sum_{\alpha\in R^+}\alpha\wedge J_\alpha+\sigma_0,\]
with $\sigma_0\in\Lambda^3\bigoplus_{\alpha>0}\h_\alpha$.

Consider a subset $S\subset R^+$ with the following two properties:
\begin{enumerate}[\upshape(i)]
    \item $S$ is balanced, i.e.~there exist signs $(s_\alpha)_{\alpha\in S}$ such that $\sum_{\alpha\in S}s_\alpha\alpha=0$,
    \item $R^+\setminus S$ is strongly orthogonal, that is, for any $\alpha,\beta\in R^+\setminus S$, neither $\alpha+\beta$ nor $\alpha-\beta$ is a root.
\end{enumerate}
Such subsets of $R^+$ are called \emph{well-balanced} and are studied in our companion article \cite{roots}. If $S\subset R^+$ is well-balanced, we can define an almost Hermitian structure $J$ on $\h$ by
\[J:=J_\t+\sum_{\alpha\in S}s_\alpha J_\alpha+J',\]
where $J_\t$ is any almost Hermitian structure on $\t$ (note that $\t$ has even dimension), and $J'$ is any almost Hermitian structure on $\bigoplus_{\alpha\in R^+\setminus S}\h_{\alpha}$ with $J'\h_\alpha\subset\h_\alpha^\perp$ for all $\alpha\in R^+\setminus S$. The latter condition is always satisfiable if $|R^+\setminus S|\neq1$. Any such defined $J$ has the desired property: indeed,
\begin{align*}
J\intprod\sigma&=\sum_{\substack{\alpha\in S\\\beta\in R^+}}s_\alpha\langle J_\alpha,J_\beta\rangle\beta+\sum_{\alpha\in S}s_\alpha J_\alpha\intprod\sigma_0+\sum_{\beta\in R^+}\langle J',J_\beta\rangle\beta+J'\intprod\sigma_0.
\end{align*}
The first sum is equal to $\sum_{\alpha\in S}s_\alpha\alpha$ and thus vanishes. Second, $J_\alpha\intprod\sigma_0=0$, since $J_\alpha$ is supported on $\h_\alpha$ and $[\h_\alpha,\h_\alpha]\subset\t$. Third, $\langle J',J_\beta\rangle=0$ since $J'\h_\beta\subset\h_\beta^\perp$. And finally, since $R^+\setminus S$ is strongly orthogonal, $[\bigoplus_{\alpha\in R^+\setminus S}\h_{\alpha},\bigoplus_{\alpha\in R^+\setminus S}\h_{\alpha}]\subset\t$, so $J'\intprod\sigma_0=0$.

Well-balanced subsets always exist, and it follows from \cite[Thm.~3.2, Thm.~4.1]{roots} that one can find a well-balanced subset $S\subset R^+$ with $|R^+\setminus S|\neq1$ as long as $\h$ is not isomorphic to a product of Lie algebras of type $\R$ and $\su(2k+1)$, $k\geq1$, together with exactly one factor $\so(3)$ or $\so(5)$.

In the first of these two remaining cases, since $\h=\so(3)\oplus\R$ is excluded, we must have $\dim\t\geq4$. Then let $S$ be the set of positive roots of $\su(2k+1)$, which is automatically balanced by \cite[Thm.~3.2]{roots}. Let $(s_\alpha)_{\alpha\in S}$ be a corresponding choice of signs, let $\t_1\subset\t$ be any $3$-dimensional subspace orthogonal to $\so(3)\subset\h$, and let $\t_2:=\t\cap(\so(3)\oplus\t_1)^\perp$, so that
\[\h=\so(3)\oplus\t_1\oplus\t_2\oplus\bigoplus_{\alpha\in S}\h_\alpha.\]
Then we can declare an almost Hermitian structure
\[J:=J_1+J_2+\sum_{\alpha\in S}s_\alpha J_\alpha,\]
where $J_1$ is any almost Hermitian structure on $\so(3)\oplus\t_1$ with $J\so(3)\subset\t_1$, and $J_2$ is an arbitrary almost Hermitian structure on $\t_2$. Such $J$ satisfies $J\intprod\sigma=0$ for similar reasons as above: first we have $J_1\intprod\sigma=0$ since $[\so(3),\t_1]=0$, second $J_2\intprod\sigma=0$ because $[\t_2,\t_2]=0$, and third
$\sum_{\alpha\in S}s_\alpha J_\alpha\intprod\sigma=\sum_{\alpha\in S}s_\alpha\alpha=0$.

In the second case, let $S$ denote again the set of positive roots of $\su(2k+1)$, which is balanced. With the corresponding choice of signs $(s_\alpha)_{\alpha\in S}$ we consider $J$ of the form
\[J=J_\t+\sum_{\alpha\in S}s_\alpha J_\alpha+J',\]
where $J_\t$ is any almost Hermitian structure on $\t$, and $J'$ is an almost Hermitian structure on the root spaces of $\so(5)$. With the same arguments as above we see that $J\intprod\sigma=0$ if and only if $J'$ is such that $J'\intprod\sigma_{\so(5)}=0$. In the final section of the proof, we show that such a $J'$ exists.

To that end, assume without restriction that $\h=\so(5)$ is equipped with the invariant inner product given by $\langle X,Y\rangle=-\frac12\tr(XY)$. We denote with $E_{ij}$ the matrix with entry $1$ at position $(i,j)$ and zero everywhere else, and choose the following orthonormal basis of $\h$:
\begin{align*}
t_1&=E_{21}-E_{12},&
t_2&=E_{43}-E_{34},\\
X_{e_1}&=E_{15}-E_{51},&
Y_{e_1}&=E_{25}-E_{52},\\
X_{e_2}&=E_{35}-E_{53},&
Y_{e_2}&=E_{45}-E_{54},\\
X_{e_1+e_2}&=\frac{1}{\sqrt{2}}(E_{13}-E_{24}-E_{31}+E_{42}),&
Y_{e_1+e_2}&=\frac{1}{\sqrt{2}}(E_{14}+E_{23}-E_{32}-E_{41}),\\
X_{e_1-e_2}&=\frac{1}{\sqrt{2}}(E_{13}+E_{24}-E_{31}-E_{42}),&
Y_{e_1-e_2}&=\frac{1}{\sqrt{2}}(-E_{14}+E_{23}-E_{32}+E_{41}).
\end{align*}
Then $\t=\spann\{t_1,t_2\}$ is a Cartan subalgebra of $\h$. If $(e_1,e_2)$ denotes the dual basis in $\t^\ast$ of $\t$, we can pick $e_1,e_2$ as simple roots, so that $R^+(\so(5))=\{e_1,e_2,e_1+e_2,e_1-e_2\}$. For each $\alpha\in R^+$, the real root space is $\h_\alpha=\spann\{X_\alpha,Y_\alpha\}$, and we have
\begin{align*}
 \ad(t)X_\alpha&=\alpha(t)Y_\alpha,&
 \ad(t)Y_\alpha&=-\alpha(t)X_\alpha
\end{align*}
for all $t\in\t$. Thus we can write the canonical three-form as
\begin{align*}
 \sigma&=\sum_{\alpha\in R^+}\alpha\wedge X_\alpha\wedge Y_\alpha+\sigma_0,\\
 \sqrt{2}\sigma_0&=-X_{e_1}\wedge X_{e_2}\wedge(X_{e_1+e_2}+X_{e_1-e_2})+X_{e_1}\wedge Y_{e_2}\wedge(Y_{e_1-e_2}-Y_{e_1+e_2})\\
 &\qquad-Y_{e_1}\wedge X_{e_2}\wedge(Y_{e_1+e_2}+Y_{e_1-e_2})+Y_{e_1}\wedge Y_{e_2}\wedge(X_{e_1+e_2}-X_{e_1-e_2}).
\end{align*}
We make the following change of basis for $\t^\perp$:
\begin{align*}
 x_1&=X_{e_1},&
 x_2&=Y_{e_1},\\
 x_3&=X_{e_2},&
 x_4&=Y_{e_2},\\
 x_5&=-\frac{1}{\sqrt{2}}(X_{e_1+e_2}+X_{e_1-e_2}),&
 x_6&=\frac{1}{\sqrt{2}}(Y_{e_1-e_2}-Y_{e_1+e_2}),\\
 x_7&=-\frac{1}{\sqrt{2}}(Y_{e_1+e_2}+Y_{e_1-e_2}),&
 x_8&=\frac{1}{\sqrt{2}}(X_{e_1+e_2}-X_{e_1-e_2}).
\end{align*}
Then $\sigma$ simplifies to
\[\sigma=e_1\wedge(x^{12}+x^{57}+x^{68})+e_2\wedge(x^{34}+x^{56}+x^{78})+x^{135}+x^{146}+x^{237}+x^{248}\]
and one quickly sees that
\[J'=x^{17}+x^{26}+x^{38}+x^{45}\]
is a complex structure on $\t^\perp$ such that $J'\intprod\sigma=0$.
\end{proof}

We now move to the next Gray-Hervella class $\W_1\oplus\W_4$ and prove the following:
\begin{thm}
\label{thm:W1W4}
In dimension larger than $4$, every PSCT almost Hermitian structure in the class $\W_1\oplus\W_4$ is automatically Kähler.
\end{thm}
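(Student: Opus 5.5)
The plan is to combine the local normal form of Theorem~\ref{thm:complex} with the type decomposition of $\tau$ with respect to $J$. By that theorem we may work locally on a product $H\times\prod_j S_j\times N$ with
\[\tau=-\tfrac12\sigma+\sum_j\alpha_j\,\xi_j\wedge\omega_j.\]
Here $J$ preserves $T_0:=\h\oplus\spann\{\xi_j\}$, each complex line $\xi_j^\perp\subset TS_j$, and $TN$. Accordingly $\omega=\omega_0+\sum_j\omega_j+\omega_N$.

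For a structure with skew torsion, being of class $\W_1\oplus\W_4$ means that $\tau=\tau_1+X\wedge\omega$ (up to harmless normalisation and replacing $X$ by $JX$). Here $\tau_1$ is of type $(3,0)+(0,3)$ and $X$ is a vector field, which is a multiple of $\omega\intprod\tau$. I would first record this algebraic characterisation, and then show in turn that $X=0$ and that $\tau_1=0$.

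\textbf{Step 1: the Lee-type vector $X$ vanishes.} This is the main obstacle, and I would argue by comparing components of both expressions for $\tau$ in the orthogonal $J$-invariant splitting.

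The basic observation is that a $(3,0)+(0,3)$ form has no component in $V\otimes\Lambda^2L$ when $L$ is a $J$-invariant $2$-plane, since $\Lambda^{2,0}L=0$. So on $T_0\otimes\Lambda^2(\xi_j^\perp)$, $T_0\otimes\Lambda^2(TN)$ and $\xi_k\otimes\Lambda^2(\xi_j^\perp)$, only $X\wedge\omega$ can contribute. Comparing these components gives the following.
\begin{itemize}
\item If $N\neq0$, then $X=0$, because $\tau$ has no legs on $N$.
\item The $\xi_j^\perp$-components of $X$ vanish, so $X\in T_0$.
\item $X$ is a multiple of $\xi_j$ for every Sasaki factor $j$. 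This forces $X=0$ if there is no Sasaki factor or more than one.
\end{itemize}
It remains to treat the case of exactly one Sasaki factor with $X=\alpha\xi$ and $N=0$. Restricting to $\Lambda^3T_0$ gives $-\tfrac12\sigma=\tau_1|_{T_0}+\alpha\,\xi\wedge\omega_0$. Now contract with $\xi$, which lies in $\ker\sigma$ because $\xi\perp\h$. The term $\xi\intprod\tau_1$ has type $(2,0)+(0,2)$, whereas $\xi\intprod(\xi\wedge\omega_0)=\omega_0-\xi\wedge J\xi$ has type $(1,1)$ (using the compatible sign convention), and it is nonzero as soon as $\dim T_0\geq4$. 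Hence $\dim T_0=2$, so $\h$ is $1$-dimensional and abelian, $\sigma=0$, and $\dim M=4$. This contradicts the assumption $\dim M>4$.

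\textbf{Step 2: the $\W_1$ part vanishes.} With $X=0$, the torsion $\tau$ is of pure type $(3,0)+(0,3)$. By the observation in Step 1, a term $\alpha_j\xi_j\wedge\omega_j$ (two legs in a complex line) cannot occur, so there are no Sasaki factors and $\tau=-\tfrac12\sigma$ on $\h$. Since $\tau$ is of pure type, every $\tau_Y$ anticommutes with $J$, and hence every $\ad_Y$, $Y\in\h$, anticommutes with $J$. Then $[\ad_Y,\ad_Z]$ commutes with $J$. But $[\ad_Y,\ad_Z]=\ad_{[Y,Z]}$ also anticommutes with $J$, so it vanishes. Therefore $[\h,\h]\subset\mathfrak z(\h)$; since $\h$ is compact, $[\h,\h]=0$. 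Thus $\h$ is abelian, $\sigma=0$ and $\tau=0$, so $J$ is $\nabla^g$-parallel, i.e.~Kähler.

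The delicate point throughout is bookkeeping the type conventions, in particular the sign and normalisation in $\tau_{\W_4}=X\wedge\omega$ versus $JX\wedge\omega$. I expect the component comparison in Step 1 to need the most care, and would double-check it against the formula \eqref{codiff} for $\omega\intprod\tau$.
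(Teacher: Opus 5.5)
Your algebraic characterization $\tau=\tau_1+X\wedge\omega$ is sound, as is your handling of the factors $N$ and $S_j$. Step~2 also works, and is a neat variant of the paper's nearly Kähler argument. The gap is in Step~1, in the case with no Sasaki factor and $N=0$. There $M$ is locally a Lie group $H$ with bi-invariant metric, $\tau=-\tfrac12\sigma$, and $J$ is an arbitrary left-invariant compatible almost complex structure on $\h$. Each of your component comparisons needs a $J$-invariant plane ($\xi_j^\perp$ or a plane in $TN$) outside the support of $\sigma$ on which $\tau$ is known. When $TM=T_0=\h$ no such plane exists, the bullet about $\xi_j$ is vacuous, and nothing forces $X=0$. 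Your claim that $X=0$ when there is no Sasaki factor is justified only when $N\neq0$.

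This is not a leftover corner; it is the heart of the theorem. In dimension $4$, $\h=\su(2)\oplus\R$ with any compatible $J$ has $\Lambda^3\h=\h\wedge\omega$ and is a non-Kähler PSCT structure of class $\W_4$ (the Vaisman case). So the Lie group case must use $\dim\h\geq6$ essentially, whereas you use the dimension hypothesis only in the one-Sasaki-factor case.

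The paper spends most of its proof on exactly this case. With $\eta=\omega\intprod\tau$, it rewrites the Gray--Hervella identity as $[Y,JY]=-\frac{2}{m-1}(|Y|^2\eta-\langle\eta,Y\rangle Y-\langle\eta,JY\rangle JY)$ and polarizes it at $J\eta$. This shows that $J\eta$ is central, and a rank argument then gives the contradiction.

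In your language, one way to close the gap is to contract $-\tfrac12\sigma=\tau_1+X\wedge\omega$ with $X$ and with $JX$. This gives $[X,JX]=0$, that $\ad_{JX}$ anticommutes with $J$, and that the $J$-linear part of $\ad_X$ is a nonzero multiple of $J$ on $W=\{X,JX\}^\perp$. Write $\ad_X|_W=cJ+A$ with $A$ anticommuting with $J$, and use $[\ad_X,\ad_{JX}]=0$ on $W$. The anticommuting part of this relation forces $\ad_{JX}=0$, so $JX$ is central. Next, if $t\in W$ commutes with $X$, the $J$-linear part gives $\ad_XJt=c'\,t$ with $c'\neq0$. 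This contradicts $\langle\ad_XJt,t\rangle=-\langle Jt,\ad_Xt\rangle=0$ unless $t=0$. Hence the centralizer of $X$ is $\spann\{X,JX\}$, so $\h$ has rank at most $2$ and nontrivial center, which forces $\dim\h\leq4$. Some argument of this kind has to be added before your proof is complete.
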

\begin{proof}
An almost Hermitian structure being of class $\W_1\oplus\W_4$ means the torsion is skew-symmetric with vanishing $\Lambda^{(2,1)+(1,2)}_0$-part. By \cite{GH}, this is equivalent to
\begin{equation}\label{gh}
(\nabla^g_XJ)(X)=\frac{-1}{2(m-1)}\left(|X|^2d^\ast \omega- \langle d^\ast \omega,X\rangle X-\langle d^\ast \omega,JX\rangle JX\right)
\end{equation}
for any $X\in TM$, where $m=\dim_\C M$ (thus $m\ge 3$ by hypothesis).

Assume first that $d^\ast \omega=0$, that is, $(M,g,J)$ is nearly Kähler (class $\W_1$). Since $\omega$ is $\nabla^\tau$-parallel, for every $X\in TM$ we have $\nabla^g_X\omega=-(\tau_X)_\ast\omega$, whence
\[0=X\intprod(\nabla^g_XJ)=-X\intprod(\tau_X)_\ast J=-[\tau_X,J]X=-\tau_XJX.\]
This shows that $\tau$ is of type $(0,3)+(3,0)$, so $\tau_X$ anti-commutes with $J$ for every $X\in TM$. On the other hand, if $(M,g,\tau)$ is a PSCT geometry we also have
\[0=JX\intprod(\tau_X)_\ast\tau=-\tau_{\tau_XJX}+[\tau_X,\tau_{JX}]=[\tau_X,\tau_{JX}],\]
showing that $\tau_X$ commutes with $\tau_{JX}=-J\tau_X$. But since $\tau_X$ also anti-commutes with $J$, this means that $\tau_X=0$ for every $X\in TM$, so $\nabla^g J=0$, i.e.~$(M,g,J)$ is Kähler.

We can thus assume for the remaining part of the proof that $d^\ast \omega\neq0$. Recall that $d^\ast \omega=2\omega\intprod\tau$ by \eqref{codiff}, so \eqref{gh} can be rewritten as
\begin{equation}
\tau_XJX=\frac{1}{m-1}(|X|^2\omega\intprod\tau-\langle\tau_X,\omega\rangle X-\langle\tau_{JX},\omega\rangle JX).
\label{W1W4}
\end{equation}
Consider the decomposition of $(M,g,\tau)$ given in Theorem \ref{thm:complex}. For any given $\alpha_i$-Sasaki factor $(S_i,g_i,\xi_i,\omega_i)$, let $X\in\xi_i^\perp\subset TS_i$ be a unit vector. Then $\tau_XJX=\pm\alpha_i\xi_i$ and $\tau_{\xi_i}=\pm\alpha_iX\wedge JX$. Plugging $X$ into \eqref{W1W4} and taking the inner product with $\xi_i$, we find
\[\pm(m-1)\alpha_i=\langle\omega\intprod\tau,\xi_i\rangle=\langle\tau_{\xi_i},\omega\rangle=\pm\alpha_i\omega(X,JX)=\pm\alpha_i,\]
which leads to a contradiction since $\alpha_i\neq0$. This shows that there is no $\alpha$-Sasaki factor in the decomposition of $M$.

Similarly, if $X$ is tangent to the Kähler factor $(N,g_N)$, then $\tau_X=0$, so \eqref{W1W4} reduces to $\omega\intprod\tau=0$, which contradicts the assumption $d^\ast\omega\neq0$. Consequently we can assume that $M=H$ for a Lie group $H$ with compact Lie algebra $\h$ of even rank, $g$ is a bi-invariant metric on $H$, $\tau=-\frac12\sigma$ with $\sigma$ the canonical three-form, and $J$ is some left-invariant almost Hermitian structure.

Write $H=\prod_iH_i\times\R^k$ with $H_i$ simple, and let $\eta:=\omega\intprod\tau$. If $k\neq0$, then for any nonzero $X\in\R^k$, \eqref{W1W4} implies that
\[|X|^2\eta=\langle\tau_{JX},\omega\rangle JX\]
since $\tau_X=-\frac12\ad(X)=0$. Thus $JX$ is proportional to $\eta$, which shows that $k\leq1$.

For any $X\in\h$, \eqref{W1W4} can be rewritten as
\begin{equation}\label{br}
[X,JX]=\frac{-2}{m-1}\left(|X|^2\eta-\langle\eta,X\rangle X-\langle\eta,JX\rangle JX\right),
\end{equation} 
and in particular
\begin{equation}\label{eta}
[\eta,J\eta]=0.
\end{equation} 
Polarizing \eqref{br}, we get for every $X,Y\in\h$:
\[[X,JY]+[Y,JX]=\frac{-2}{m-1}\left(2\langle X,Y\rangle\eta-\langle\eta,X\rangle Y-\langle\eta,Y\rangle X-\langle\eta,JX\rangle JY-\langle\eta,JY\rangle JX\right).\]
For $X=J\eta$ this reduces to
\[JY=-\frac{m-1}{2|\eta|^2}\left([J\eta,JY]+[\eta,Y]\right)+\frac{1}{|\eta|^2}\left(-\langle J\eta,Y\rangle\eta+\langle\eta,Y\rangle J\eta\right).\]
By \eqref{eta}, $J$, $\ad(\eta)$ and $\ad(J\eta)$ preserve $\spann\{\eta,J\eta\}$ and hence the complement $\{\eta,J\eta\}^\perp$. Let $J_0:=J|_{\{\eta,J\eta\}^\perp}$, $\ad_0(\eta):=\ad(\eta)|_{\{\eta,J\eta\}^\perp}$ and $\ad_0(J\eta):=\ad(J\eta)|_{\{\eta,J\eta\}^\perp}$. Then the above relation implies
\begin{equation}\label{j00}
J_0=-\frac{m-1}{2|\eta|^2}\left(\ad_0(\eta)+\ad_0(J\eta)\circ J_0\right).
\end{equation} 
In particular this shows that $\ad_0(J\eta)\circ J_0$ is skew-symmetric, so $\ad_0(J\eta)$ anticommutes with $J_0$. The relation \eqref{j00} can be equivalently written as
\begin{equation}\label{j0}J_0=-\frac{m-1}{2|\eta|^2}\left(\Id+\frac{m-1}{2|\eta|^2}\ad_0(J\eta)\right)^{-1}\circ\ad_0(\eta).\end{equation} 

Since by \eqref{br}, $\ad_0(\eta)$ commutes with $\ad_0(J\eta)$, \eqref{j0} shows that $\ad_0(\eta)$ commutes with $J_0$. Then \eqref{j00} implies that $J_0$ commutes with $\ad_0(J\eta)$. Since they also anti-commute, we must have $\ad_0(J\eta)=0$. In particular, $J\eta$ belongs to the center of $\h$. 

By assumption, $\dim(H)>4$, so $\rank\h\ge 2$. If the rank of $\h$ is at least $3$, then $\ad_0(\eta)$ must have a nontrivial kernel, which leads to a contradiction with \eqref{j0}.

Finally, the rank of $\h$ cannot be equal to $2$, since there are no compact Lie algebras of rank $2$ and dimension larger than $4$ with nontrivial center.
\end{proof}

Theorem~\ref{thm:W1W4} says the $\W_3$-component of the torsion of a PSCT almost Hermitian structure can never vanish in dimensions larger than $4$ unless the structure is Kähler. Recall that, by virtue of the torsion being skew-symmetric, the $\W_2$-component is always zero. Since PSCT almost Hermitian structures in the class $\W_3\oplus\W_4$ were classified in \cite{BPT} and those in $\W_1\oplus\W_3$ were described in Theorem~\ref{thm:W1W3}, we are left with the study of their intersection, i.e.~the class $\W_3$ of semi-Kähler and integrable PSCT almost Hermitian structures. It turns out that there are no nontrivial solutions in this case:

\begin{thm}
\label{thm:W3}
A PSCT almost Hermitian manifold of class $\W_3$ is Kähler.
\end{thm}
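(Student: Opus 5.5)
The plan is to combine the two descriptions already obtained: a structure of class $\W_3$ lies both in $\W_1\oplus\W_3$ (semi-Kähler) and in $\W_3\oplus\W_4$ (integrable). I will first use the semi-Kähler condition to strip the decomposition of Theorem~\ref{thm:complex} down to a Kähler factor times a Lie group. Then I will use integrability to identify $J$ on the Lie algebra in terms of roots. Finally I will show that $\omega\intprod\sigma$ is then a nonzero multiple of the Weyl vector unless the Lie algebra is abelian.

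\emph{Step 1.} The first half of the proof of Theorem~\ref{thm:W1W3} does not use $n\ge 6$. There, $d^\ast\omega=2\omega\intprod\tau$ by \eqref{codiff}, and
\[\omega\intprod\tau=-\tfrac12\omega_0\intprod\sigma+\sum_j\alpha_j\xi_j .\]
So semi-Kähler forces the absence of $\alpha$-Sasaki factors, together with $\omega_0\intprod\sigma=0$. Hence, locally, $(M,g,J)$ is the product of a Kähler manifold $N$ with a Lie group $H$. Here $H$ has compact Lie algebra $\h$, a bi-invariant metric, torsion $\tau=-\frac12\sigma$, and a left-invariant orthogonal $J$ on $\h$ with $\omega\intprod\sigma=0$. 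It therefore suffices to show that $\h$ is abelian, since then $\sigma=0$, $\tau=0$ and $\nabla^gJ=0$.

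\emph{Step 2.} Integrability of the left-invariant $J$ on $\h$ means that $\h^{1,0}\subset\h^\C$ is a complex subalgebra. Equivalently, $\tau$ has no $(3,0)+(0,3)$-part, which for $\tau=-\frac12\sigma$ is exactly the vanishing of the Nijenhuis tensor written with brackets. I will invoke Samelson's classification, which is also the description recalled after Theorem~\ref{thm:complex} from \cite{BPT}. It provides a Cartan subalgebra $\t\subset\h$ with $J\t=\t$ and a choice of positive roots $R^+$ such that
\[\h^{1,0}=\t^{1,0}\oplus\bigoplus_{\alpha\in R^+}\g_\alpha .\]
In the real notation of the proof of Theorem~\ref{thm:W1W3}, this means $J=J_\t+\sum_{\alpha\in R^+}\epsilon J_\alpha$, with one sign $\epsilon=\pm1$ fixed by convention and the same for all positive roots.

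\emph{Step 3.} Compute $\omega\intprod\sigma$ using the decomposition $\sigma=\sum_{\alpha>0}\alpha\wedge J_\alpha+\sigma_0$. As in the proof of Theorem~\ref{thm:W1W3}:
\begin{itemize}
\item $J_\t\intprod\sigma=0$, since $\t$ is abelian;
\item $J_\alpha\intprod\sigma_0=0$, since $[\h_\alpha,\h_\alpha]\subset\t$;
\item $\langle J_\alpha,J_\beta\rangle=0$ for $\alpha\neq\beta$.
\end{itemize}
Hence, up to a positive normalisation, $\omega\intprod\sigma=\pm\sum_{\alpha\in R^+}\alpha=\pm2\rho$. The Weyl vector $\rho$ is nonzero whenever $R^+\neq\emptyset$. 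So $\omega\intprod\sigma=0$ forces $R^+=\emptyset$, that is, $\h$ is abelian. This concludes the argument in all dimensions, including $n=4$.

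The main obstacle is Step 2: justifying rigorously that an integrable, left-invariant, orthogonal $J$ on a compact Lie algebra is of Samelson type with respect to the given bi-invariant metric, and in particular that $J$ preserves some Cartan subalgebra $\t$. If citing Samelson is not enough, I would argue directly. The subalgebra $\h^{1,0}$ is an isotropic complex subalgebra of half dimension, and $\h^{1,0}\cap\overline{\h^{1,0}}=0$. Its normaliser is then a parabolic subalgebra containing a $J$-invariant Cartan subalgebra, and maximality of $\h^{1,0}$ forces it to be a Borel subalgebra modulo $\t^{1,0}$, i.e.\ of the stated form.
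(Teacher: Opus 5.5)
Your proposal is correct and follows essentially the same route as the paper. You reduce, via the semi-Kähler half of the proof of Theorem~\ref{thm:W1W3}, to a Kähler factor times a Lie group $H$ with $\tau=-\frac12\sigma$; you then use the Samelson--Pittie description from \cite[Thm.~A]{BPT} to write $J|_\h=J_\t+\sum_{\alpha\in R^+}J_\alpha$, and observe that $J\intprod\sigma=\sum_{\alpha\in R^+}\alpha\neq0$ unless $\h$ is abelian. Note that Step~2 rests entirely on that citation: your fallback sketch that the normaliser of $\h^{1,0}$ is parabolic is not justified and would not stand on its own as a proof.
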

\begin{proof}
The statement only makes sense for $\dim(M)\ge 6$. By Theorem~\ref{thm:W1W3}, we know that $(M,g)$ is locally isometric to $(H,g_H)\times (N,g_N)$ where $H$ is a Lie group with compact Lie algebra $\h\neq\su(2)\oplus\R$ and bi-invariant metric $g_H$, $(N,g_N)$ is a Kähler manifold, and $\tau=-\frac12\sigma$, with $\sigma$ the canonical 3-form of $H$. Moreover,
\cite[Thm.~A]{BPT} shows that $J|_\h$ is a \emph{linear complex structure} in the sense of Samelson, i.e. it preserves a Cartan subalgebra $\t\subset\h$ and there is a choice of positive roots $R^+$ \cite{Pit,Sam} such that the (complex) root spaces corresponding to $R^+$ are contained in the eigenspace $\h^{1,0}$. Equivalently, there is a unique Borel subalgebra of $\h^\C$ containing $\h^{1,0}$. This means that the Samelson complex structure is given by
\[J|_\h=J_\t+\sum_{\alpha\in R^+}J_\alpha,\]
where $J_\t$ is some almost Hermitian structure on $\t$, and $J_\alpha$ are as in the proof of Theorem~\ref{thm:W1W3}. But then
\[J\intprod\tau=-\frac12 J|_\h\intprod\sigma=-\frac12\sum_{\alpha\in R^+}\alpha.\]
The sum of positive roots of $\h$ can only vanish if $H$ is abelian, thus $\tau=0$ and $(M,g,J)$ is Kähler.
\end{proof}

\section{Some other PSCT \texorpdfstring{$G$}{G}-structures}
\label{sec:other}

Using Corollary~\ref{corprod}, we can investigate PSCT geometries parallelizing $G$-structures for some other groups $G$. We begin with the Riemannian holonomy groups in the Berger list, other than $\U(n/2)$ (which was studied in the previous section) and $\mathrm{G}_2$ and $\Spin(7)$ which will be considered below.

\begin{thm}
\label{thm:parG}
Let $(M^n,g,\tau)$ be a simply connected PSCT geometry parallelizing a $G$-structure, where $G$ is either $\SU(k)$ with $n=2k\geq4$, or one of $\Sp(k)$, $\Sp(k)\Sp(1)$ with $n=4k\geq8$. Then $(M,g,\tau)$ decomposes as in \eqref{prod}, with $(M_i,g_i,\tau_i)$ being compact, simple Lie groups with flat $\nabla^{\tau_i}$.

Moreover, if $G=\SU(k)$, then the irreducible torsion-free Riemannian factors are Calabi--Yau. If $G=\Sp(k)$ or $\Sp(k)\Sp(1)$ and $\tau\neq0$, then the irreducible torsion-free Riemannian factors are hyperkähler.
\end{thm}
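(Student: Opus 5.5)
We will start from the global decomposition of Corollary~\ref{corprod}, which applies once we pass to the completion-free local statement and use simple connectivity to make the holonomy algebra equal to the connected holonomy. The key point is that $\hol(\nabla^\tau)$ is contained in $\g\subset\so(n)$ with $\g\in\{\su(k),\sp(k),\sp(k)\oplus\sp(1)\}$, and by Corollary~\ref{corprod} it is block diagonal with blocks $\hol(\nabla^{\tau_i})$, zeros on the flat factor $\R^k$, and $\hol(\nabla^{h_j})$. In particular each factor's holonomy preserves a $\U(n/2)$-structure on $T_pM$, since $\SU(k)\subset\U(k)$ and $\Sp(k)\subset\Sp(k)\Sp(1)$. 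We then argue factor by factor, exactly as in the proof of Theorem~\ref{thm:complex}, using Lemma~\ref{lem:reducedstructure} applied to the representation of one block on $T_pM_i\oplus\R^{r}$, where the rest of $T_pM$ is a trivial representation of that block.

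First we exclude the PSCT factors with nonzero holonomy. If $\hol(\nabla^{\tau_i})$ is a compact simple Lie algebra acting by the adjoint representation, Lemma~\ref{lem:reducedstructure}(i),(iii) give a contradiction exactly as in Theorem~\ref{thm:complex}. Next come the three-dimensional factors with holonomy $\so(3)$ (generic) or $\u(1)$ ($\alpha$-Sasaki). For $\so(3)$ acting irreducibly on $\R^3$, Lemma~\ref{lem:reducedstructure}(i) forbids an invariant complex structure on an odd-dimensional space. For the $\u(1)$ case, we use the finer structure. For $\SU(k)$, the element of $\hol$ must be traceless complex-linearly. The generator acts as a rotation on $\xi_i^\perp$ and trivially on everything else (including $\xi_i$), so it has complex trace $\pm i\neq0$ and is not in $\su(k)$. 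For $\Sp(k)\Sp(1)$ (and a fortiori $\Sp(k)$), the generator is a $\u(1)$ acting nontrivially on a $2$-dimensional irreducible subspace plus a trivial remainder of dimension $\geq 6$. The weight argument in Lemma~\ref{lem:reducedstructure}(ii) for $\dim V=2$ shows this is impossible for $n\ge 8$. Hence every $M_i$ with $\tau_i\neq0$ has $\hol(\nabla^{\tau_i})=0$, so by Theorem~\ref{thm:irred} and Remark~\ref{overlap} it is a compact simple Lie group with a $(\pm)$-connection, which is flat.

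Finally we treat the Riemannian factors $N_j$. For $\SU(k)$: by Lemma~\ref{lem:reducedstructure}(i) each $\hol(\nabla^{h_j})$ preserves a complex structure on $TN_j$, so $N_j$ is Kähler. Moreover $\hol(\nabla^{h_j})\oplus0\subset\su(k)$ forces the block to be traceless, i.e.\ $\hol(\nabla^{h_j})\subset\su(TN_j)$, so $N_j$ is Calabi--Yau. For $\Sp(k)\Sp(1)$ with $\tau\neq0$, there is at least one nonflat-free direction, namely the nonzero tangent of some $M_i$, on which all of $\hol$ acts trivially. So for each $j$ the complement of $TN_j$ is nonzero and trivial ($r\neq0$), and Lemma~\ref{lem:reducedstructure}(ii) gives an invariant hypercomplex structure on $TN_j$, i.e.\ $N_j$ is hyperkähler. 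For $\Sp(k)$ this is immediate from $\sp(k)\supset$ block and the same argument.

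The main obstacle is the $\u(1)$ $\alpha$-Sasaki case for $\Sp(k)\Sp(1)$. Here one must check that Lemma~\ref{lem:reducedstructure}(ii)'s weight argument applies: the representation is $V\oplus\R^{n-2}$ with $n\ge8$, which is exactly the case $\dim V=2$ treated there. A secondary subtlety is making sure the passage from local to global uses simple connectivity, so that the decomposition~\eqref{prod} holds globally.
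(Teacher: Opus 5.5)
Your proposal follows the paper's proof essentially step by step. You use the block-diagonal $\hol(\nabla^\tau)\subset\g$ from Corollary~\ref{corprod}, exclude the $\u(1)$ and adjoint-type factors, and then apply Lemma~\ref{lem:reducedstructure}(i) plus tracelessness for $\SU(k)$, and Lemma~\ref{lem:reducedstructure}(ii) with $r\neq0$ (because $\tau\neq0$) in the quaternionic cases. The only difference is cosmetic. The paper disposes of the $\alpha$-Sasaki factors in one line by noting that $\g$ contains no rank-two elements. You verify this fact separately: via the complex trace for $\SU(k)$, and via the weight argument of the $\dim V=2$ case of Lemma~\ref{lem:reducedstructure}(ii) for $\Sp(k)\Sp(1)$. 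Both routes are fine.

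One justification is wrong as written. For $G=\Sp(k)\Sp(1)$ the holonomy blocks need not preserve any $\U(n/2)$-structure, since $\Sp(k)\Sp(1)\not\subset\U(2k)$; the inclusion $\Sp(k)\subset\Sp(k)\Sp(1)$ you cite does not help. So in that case you cannot exclude, via part (i), either the adjoint-type factors or the generic three-dimensional ones (holonomy $\so(3)$, which is just the adjoint representation of $\su(2)$). Use part (ii) of the lemma instead:
\begin{itemize}
\item for $\dim V=3$ it rules the case out directly;
\item for $\dim V\geq4$ it produces an $\h$-invariant hypercomplex structure on $\h$, contradicting (iii);
\item if $r=0$, apply (iii) directly.
\end{itemize}

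Also, your argument is purely local, at the level of holonomy algebras, and does not need simple connectivity. Making \eqref{prod} global, as your closing remark suggests, would require completeness as well as simple connectivity (Corollary~\ref{corprod}). The paper's proof does not use simple connectivity either.
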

\begin{proof}
If $\nabla^\tau$ parallelizes a $G$-structure, then $\hol(\nabla^\tau)\subset\g$. The holonomy algebra is a diagonal sum of the holonomy algebras of the factors in \eqref{prod}, that is,  $\hol(\nabla^\tau)$ is a direct sum of subalgebras of $\so(n)$, each of the form $\diag(\pi(\h),0,\ldots,0)$, where $\h$ is either a compact simple Lie algebra and $\pi$ is its the adjoint representation, or $\h=\u(1)$ and $\pi$ is its standard representation on $\R^2$, or $(\pi,\h)$ is an irreducible Riemannian holonomy representation.

Suppose $(M,g,\tau)$ has a local irreducible factor $(M_i,g_i,\tau_i)$ as in Theorem~\ref{thm:irred}, and $\hol(\nabla^{\tau_i})\neq0$. First, since $\g$ does not contain any decomposable two-forms (i.e.~endomorphisms of rank $2$), $\hol(\nabla^\tau)$ cannot contain any $\u(1)$-summand, which rules our that $(M_i,g_i,\tau_i)$ is non-round Sasaki. Second, if $\hol(\nabla^{\tau_i})$ is a compact simple Lie algebra acting on $T_pM_i$ as its adjoint representation, then by Lemma~\ref{lem:reducedstructure}, it cannot preserve a $G$-structure on $T_pM$. Thus we must have $\hol(\nabla^{\tau_i})=0$, i.e.~$(M_i,g_i,\tau_i)$ is a compact simple Lie group with $(\pm)$-connection.

Let $(N_j,h_j)$ be a Riemannian factor in \eqref{prod}. If $G=\SU(k)$, then $\hol(\nabla^{h_j})\subset\u(l)$ by Lemma~\ref{lem:reducedstructure} (for $l=\dim_\C N_j$), and since
\[\diag(\u(l),0,\ldots,0)\cap\su(k)=\diag(\su(l),0,\ldots,0),\]
we can conclude $\hol(\nabla^{h_j})\subset\su(l)$. If $G=\Sp(k)$ or $\Sp(k)\Sp(1)$, we can again apply Lemma~\ref{lem:reducedstructure} to obtain $\hol(\nabla^{h_j})\subset\sp(l)$, $l=\dim_\H N_j$ (note that $r\neq0$ since $\tau\neq0$).
\end{proof}

We next consider the case $G=\mathrm{G}_2$:
\begin{thm}
\label{thm:g2}
Every PSCT geometry $(M^7,g,\tau)$ with $\tau\neq0$ parallelizing a $\rmG_2$-structure is locally isomorphic to one of $S^3\times\R^4$, or $S^3\times S^3\times\R$, or $S^3\times X^4$, where $X^4$ is a $K3$ surface. In each of these cases, $S^3$ is equipped with a bi-invariant metric and flat Cartan--Schouten connection, while the other factors are torsion-free.
\end{thm}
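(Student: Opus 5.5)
We start from Corollary~\ref{corprod}: locally $(M^7,g,\tau)$ is a product of factors $(M_i,g_i,\tau_i)$ from Theorem~\ref{thm:irred}, a flat factor $\R^k$, and irreducible non-flat Riemannian factors $(N_j,h_j)$. The holonomy algebra $\hol(\nabla^\tau)$ is the diagonal sum of the factor holonomies and must lie in $\mathfrak{g}_2$. The plan is to eliminate every possible factor until only the listed products remain.

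First we restrict the torsion factors. Since $\tau\neq0$, there is at least one $M_i$, and it has dimension $3$ or $\dim\h\geq3$ with $\h$ compact simple. A simple Lie group of dimension at most $7$ is three-dimensional, since the next simple compact Lie algebra, $\su(3)$, has dimension $8$. The same holds for the noncompact duals and flat Lie algebra factors in cases (ii)--(iii). So every $M_i$ is three-dimensional.

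Next we use that $\mathfrak{g}_2$ contains no rank-$2$ endomorphisms. Indeed, an element of $\mathfrak{g}_2\subset\so(7)$ stabilizes the $3$-form $\phi$, and any nonzero element has rank $4$ or $6$, because the maximal torus of $\mathfrak{g}_2$ acts on $\R^7$ with weights $0,\pm\epsilon_1,\pm\epsilon_2,\pm(\epsilon_1+\epsilon_2)$. This rules out non-round $\alpha$-Sasaki factors, whose holonomy is $\u(1)$ acting on a $2$-plane. It also rules out $\hol(\nabla^{\tau_i})=\so(3)$ acting on a $3$-dimensional factor: its stabilizer in $\so(7)$ of a splitting $\R^3\oplus\R^4$ acting only on $\R^3$ is $\so(3)\oplus 0$, whose elements have rank $2$. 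In the same way, an $\h$-factor of type (ii)/(iii) with $\h=\su(2)$ acting by $\ad$ on $\R^3$ is excluded. Hence every torsion factor is a round $S^3$ with flat $(\pm)$-connection, and there are at most two of them since $\dim M=7$.

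It remains to treat the torsion-free part of dimension $4$ or $1$. With two $S^3$ factors, the remainder is $\R$. With one $S^3$ factor, the remainder $R^4$ has holonomy acting trivially on the $3$-dimensional complement $V=T S^3$. Here I would use the standard fact that the stabilizer in $\rmG_2$ of three orthonormal vectors spanning an associative $3$-plane is $\SU(2)$, acting on the coassociative complement as $\sp(1)$. For a non-associative $3$-plane the stabilizer is smaller, namely $\SU(2)$ fixing a $4$-plane containing $V$'s span. In either case $\hol(R^4)\subset\su(2)=\sp(1)$, so the $4$-dimensional factor is flat or locally hyperkähler, i.e.\ $K3$-like (Ricci-flat Kähler). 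We also need to check that the $4$-factor cannot split further as $N^2\times\R^2$ or $N^3\times\R$ with non-flat $N$. A $2$-dimensional non-flat factor would give rank-$2$ holonomy, and a $3$-dimensional irreducible factor has holonomy $\so(3)$ with rank-$2$ elements, so both are excluded. The irreducible $4$-dimensional factor with holonomy $\sp(1)$ is what the statement calls a $K3$ surface (locally). This yields $S^3\times\R^4$, $S^3\times X^4$ and $S^3\times S^3\times\R$.

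The main obstacle is the linear-algebra step identifying the subalgebra of $\mathfrak{g}_2$ acting trivially on a $3$-dimensional subspace $V$ and bounding it by $\su(2)$. I would handle it by noting that fixing two vectors $u,v$ already reduces to $\SU(2)$ fixing $u,v,u\times v$, so the fixer of $V$ lies in this $\su(2)$ acting on $\{u,v,u\times v\}^\perp\cong\H$. For the converse direction, one only has to exhibit $\rmG_2$-structures on these products, e.g.\ $\phi=\vol_{S^3}+\sum e^i\wedge\omega_i$ using a left-invariant frame and a hyperkähler triple, which is routine.
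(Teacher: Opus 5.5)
Your proof is correct, and it reaches the same dichotomy $\hol(\nabla^\tau)\in\{0,\diag(\su(2),0,0,0)\}$ as the paper by a different route.

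\textbf{The paper's route.} It proves directly from the $\rmG_2$-form $\varphi$ that $\g_2$ contains no decomposable two-forms, which rules out $\u(1)$-summands. It then places each diagonal summand $\diag(\pi(\h),0,\ldots,0)$ inside one of the maximal subalgebras $\su(3)$, $\so(4)$, $\so(3)_{\mathrm{irr}}$ of $\g_2$. Finally it inspects which of these can act irreducibly on one factor and trivially on the rest.

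\textbf{Your route.} You proceed in three steps.
\begin{enumerate}[(a)]
\item A dimension count on compact simple Lie algebras forces every torsion factor to be $3$-dimensional.
\item The weights $0,\pm\epsilon_1,\pm\epsilon_2,\pm(\epsilon_1+\epsilon_2)$ show that nonzero elements of $\g_2$ have rank $4$ or $6$. This removes every nonzero holonomy on a $3$-dimensional factor at once, both $\u(1)$ and $\so(3)$.
\item The holonomy of the $4$-dimensional complement is bounded by the pointwise stabilizer of $V=TS^3$, using the transitivity of $\rmG_2$ on orthonormal pairs with stabilizer $\SU(2)$.
\end{enumerate}

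\textbf{Comparison.} Your route is more elementary and self-contained. It avoids the list of maximal subalgebras and the rather compressed case analysis inside $\su(3)$ and $\so(4)$. It also yields the correct representation directly, namely $\sp(1)$ acting standardly on the coassociative complement, which is exactly what you need to conclude hyperk\"ahler. The paper's formulation is set up so that it can be reused for $\Spin(7)$ by passing to the $\rmG_2$ stabilizer of a vector. There, a pure dimension count would not exclude $\su(3)$, and an extra argument is needed.

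\textbf{One statement to correct.} For a non-associative $3$-plane $V$, the pointwise stabilizer is not ``an $\SU(2)$ fixing a $4$-plane''; it is trivial. Write $V=\spann\{u,v,w\}$ with an orthonormal basis. The third vector $w$ then has a nonzero component in $\{u,v,u\times v\}^\perp\cong\C^2$. Every nonzero element of $\su(2)$ is invertible on $\C^2$, so the only element of $\su(2)$ killing that component is $0$. Hence in the non-associative case the $4$-dimensional factor is flat, and only for associative $V$ do you get $\hol\subset\sp(1)$ on $V^\perp$. Your final conclusion is unaffected.

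Your separate check excluding $N^2\times\R^2$ and $N^3\times\R$ is also redundant once you know $\hol\subset\sp(1)$ acting standardly.
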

\begin{proof}
We proceed as in the proof of Theorem~\ref{thm:parG}. By Corollary \ref{corprod}, $(M,g,\tau)$ decomposes as in \eqref{prod}, and $\hol(\nabla^\tau)\subset\g_2\subset\so(7)$ is a diagonal sum of the holonomy algebras of the factors in \eqref{prod}. Suppose now that $\h$ is one of these holonomy algebras, embedded as $\diag(\pi(\h),0,\ldots,0)\subset\hol(\nabla^\tau)$.

First, we can rule out $\h=\u(1)$ since $\g_2\subset\so(7)$ does not contain any non-zero decomposable two-forms. To see this, assume that $X\wedge Y\in\g_2$ for some non-zero orthogonal vectors $X,Y\in\R^7$. Denoting by $\varphi\in\Lambda^3\R^7$ the $\rmG_2$-form, we have both $\varphi(X,Y)=0$ and
\[0=(X\wedge Y)_*\varphi=Y\wedge(X\intprod\varphi)-X\wedge(Y\intprod\varphi).\]
Taking the interior product with $Y$ in this relation and using the orthogonality of $X,Y$ yields $|Y|^2X\intprod\varphi=0$, which is impossible since $X,Y$ are non-zero and $\varphi$ has no kernel.

If $\diag(\pi(\h),0,\ldots,0)\subset\g_2$, then it is contained in a maximal proper subalgebra of $\g_2$. Up to conjugacy, these are the following:
\begin{itemize}
\item $\su(3)$, splitting $\R^7$ into the irreducible summands $\R\oplus\C^3$,
\item $\so(4)$, splitting $\R^7$ into the irreducible summands $\R^3\oplus\R^4$,
\item $\so(3)_{\mathrm{irr}}$, acting irreducibly on $\R^7$.
\end{itemize}
In the first case, $(\pi,\h)$ could be the Riemannian holonomy representation $(\C^3,\su(3))$, but then $\tau=0$. Any other simple or Riemannian holonomy algebra in $\su(3)$ is conjugate to $\su(2)$, splitting $\R^7=\R\oplus\R\oplus\R\oplus\C^2$. This is at the same time the only such subalgebra of $\so(4)$ for which $\R^7$ has a trivial subrepresentation. So we find the possibility $(\C^2,\su(2))$. The last case only has $\u(1)$-subalgebras, which we have already excluded.

Thus, either $\hol(\nabla^\tau)=\diag(\su(2),0,0,0)$ or $\hol(\nabla^\tau)=0$. In the first case, $(M,g,\tau)$ is locally the product of a K3 surface with some $3$-dimensional PSCT geometry with nonzero torsion and flat connection (so necessarily $S^3$). In the second case, $(M,g,\tau)$ is a $7$-dimensional PSCT geometry with $\tau\neq0$ and flat connection, thus locally isomorphic to either $S^3\times\R^4$ or $S^3\times S^3\times\R$.
\end{proof}

Finally, we study the case $G=\Spin(7)$:
\begin{thm}
\label{thm:spin7}
Let $(M^8,g,\tau)$ be a PSCT geometry with $\tau\neq0$, parallelizing a $\Spin(7)$-structure. Then it is locally isomorphic to one of $S^3\times\R^5$, or $S^3\times S^3\times\R^2$, or $\SU(3)$, or $S^3\times\R\times X^4$, where $X^4$ is a $K3$ surface. In each of these cases, $S^3$ resp.~$\SU(3)$ is equipped with a bi-invariant metric and flat Cartan--Schouten connection, while the other factors are torsion-free.
\end{thm}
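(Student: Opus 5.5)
I will follow the same pattern as in the proofs of Theorems~\ref{thm:parG} and~\ref{thm:g2}. By Corollary~\ref{corprod}, $(M,g,\tau)$ decomposes locally as in \eqref{prod}. Moreover, $\hol(\nabla^\tau)\subset\spin(7)\subset\so(8)$ is the diagonal sum of the holonomy algebras $\diag(\pi(\h),0,\ldots,0)$ of the factors. Each such $(\pi,\h)$ is one of the following: the adjoint representation of a compact simple $\h$ (coming from a factor of type (i)--(iii) in Theorem~\ref{thm:irred}), the $\u(1)$ of a non-round $3$-dimensional $\alpha$-Sasaki factor, $\so(3)$ on a generic $3$-dimensional factor, or an irreducible Riemannian holonomy representation.

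\textbf{Step 1: no rank-two elements.} First I rule out $\u(1)$ by showing that $\spin(7)$ contains no nonzero decomposable two-form. I use the Cayley $4$-form $\Phi$. If $X\wedge Y\in\spin(7)$ with $X\perp Y$ nonzero, then $0=(X\wedge Y)_\ast\Phi=Y\wedge(X\intprod\Phi)-X\wedge(Y\intprod\Phi)$. Contracting with $Y$ gives $|Y|^2X\intprod\Phi=Y\wedge(Y\intprod X\intprod\Phi)$, up to sign and with $\langle X,Y\rangle=0$. Contracting further with $Y$ forces $Y\intprod X\intprod\Phi=0$, hence $X\intprod\Phi=0$. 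This is impossible since $\Phi$ has trivial kernel.

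\textbf{Step 2: maximal subalgebras.} A proper subalgebra $\diag(\pi(\h),0,\ldots,0)$ acting trivially on some direction lies in the stabilizer of a vector, i.e.\ in $\g_2$ acting on $\R\oplus\R^7$. This case reduces to the analysis in the proof of Theorem~\ref{thm:g2}: the options are $\su(3)$ on $\R^2\oplus\C^3$, $\su(2)$ on $\R^4\oplus\C^2$, or $\g_2$ itself on $\R\oplus\R^7$. The Riemannian holonomies $\g_2$ and $\su(3)$ acting on a $7$- resp.\ $6$-dimensional factor leave at most $2$ dimensions for the torsion factors. Since $\tau\neq0$ requires a factor of dimension $\geq3$, these are excluded.

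If instead $\pi(\h)$ acts on all of $\R^8$ without trivial summand, the factor is the whole manifold and $\h\subset\spin(7)$ acts on $\R^8$ irreducibly. If this is a Riemannian holonomy, then $\tau=0$. If it is an adjoint representation, $\h$ is simple of dimension $8$, so $\h=\su(3)$. I then check, via the maximal subalgebras of $\spin(7)$, whether the adjoint $\su(3)\subset\so(8)$ lies in $\spin(7)$. I expect it does not, since its invariant $4$-forms relate to $\sigma\wedge$-type forms; the cleanest approach is to count $\su(3)$-invariants in $\Lambda^4_{35}$ or $\Lambda^4_{+}$. Lemma~\ref{lem:reducedstructure}-type arguments do not directly apply here, so this step, namely deciding which adjoint representations of simple $\h$ (only $\su(3)$ in dimension $8$, and $\su(2)$ in dimension $3$) embed in $\spin(7)$ together with trivial summands, is the main obstacle. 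For $\su(2)\oplus\R^5$ or $\su(2)\oplus\su(2)\oplus\R^2$ in adjoint form, I will use that these lie in $\g_2$-type stabilizers and invoke the case analysis of Theorem~\ref{thm:g2}, which already excluded adjoint $\su(2)$ and the $\so(3)$ on a $3$-dimensional generic factor.

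\textbf{Step 3: conclusion.} The remaining possibilities are $\hol(\nabla^\tau)=\diag(\su(2),0,\ldots,0)$ acting on a $\C^2$ factor, or $\hol(\nabla^\tau)=0$. In the first case the Riemannian factor is a hyperkähler $4$-manifold (K3). The complement is $4$-dimensional, carries nonzero torsion, and has flat $\nabla^\tau$, hence is $S^3\times\R$. In the flat case, $M$ is locally an $8$-dimensional compact Lie group with bi-invariant metric and a $(\pm)$-connection. Its Lie algebra is compact, $8$-dimensional and non-abelian, so it is $\su(2)\oplus\R^5$, $\su(2)^2\oplus\R^2$ or $\su(3)$. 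A left-invariant $\Spin(7)$-structure trivially exists in each case. This gives the list in the statement, together with the required connections.
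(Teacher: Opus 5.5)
Your overall architecture is the paper's. A summand of $\hol(\nabla^\tau)$ acting reducibly on $\R^8$ fixes a vector, so it lies in a conjugate of $\g_2$ and is handled by the analysis of Theorem~\ref{thm:g2}. An irreducible summand is either a Riemannian holonomy, forcing $\tau=0$, or the adjoint representation of $\su(3)$. The flat case then gives the three Lie groups.

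The genuine gap is the one step that is new compared to the $\rmG_2$ case. You never prove that $\ad(\su(3))\subset\so(8)$ is not conjugate into $\spin(7)$; you only say you ``expect'' it. Without this, the $8$-dimensional factors of Theorem~\ref{thm:irred} with $\h=\su(3)$ and $\hol(\nabla^\tau)=\su(3)$ are not excluded: the non-flat Cartan--Schouten connections on $\SU(3)$, the flat Lie algebra $\su(3)$, and $\SU(3)^\C/\SU(3)$. The heuristic you offer (invariant $4$-forms ``related to $\sigma\wedge$-type forms'') is not an argument. In fact there are no nonzero invariant $4$-forms at all: $\sigma\wedge\sigma=0$, and $\su(3)$ has no invariant vectors.

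The missing idea, which is what the paper uses, is that $\spin(7)$ annihilates the nonzero Cayley $4$-form. By contrast, $(\Lambda^4\su(3))^{\su(3)}\cong H^4(\SU(3);\R)=0$ by Chevalley--Eilenberg, since $H^\ast(\SU(3);\R)$ is an exterior algebra on generators of degrees $3$ and $5$. There is also an alternative route. Under $\spin(7)\cong\so(7)$, any $\su(3)\subset\spin(7)$ acts on $\R^7$ as $\R\oplus\C^3$. Hence it lies in $\spin(6)\cong\su(4)$ and acts on $\Sigma_7=\C^4$ as $\C^3\oplus\C$, never irreducibly.

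Separately, your Step~1 computation does not work as written. Contracting $|Y|^2X\intprod\Phi=Y\wedge(Y\intprod X\intprod\Phi)$ once more with $Y$ returns an identity, not $Y\intprod X\intprod\Phi=0$. In the $\rmG_2$ proof this term was killed by the extra input $\varphi(X,Y)=0$, which you have no analogue of here. The claim itself is true. Your identity implies $Y\wedge(X\intprod\Phi)=0$, while for unit $X$ the form $X\intprod\Phi$ is the $\rmG_2$-form on $X^\perp$, whose restriction to the $6$-dimensional subspace $\{X,Y\}^\perp$ is nonzero. The step is superfluous anyway: $\diag(\u(1),0,\ldots,0)$ acts reducibly on $\R^8$, so it already falls under your $\g_2$ reduction, where the argument of Theorem~\ref{thm:g2} excludes it. This is how the paper handles it.
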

\begin{proof}
As before, we apply Corollary \ref{corprod}  to $(M,g,\tau)$. Then $\hol(\nabla^\tau)\subset\spin(7)\subset\so(8)$ can be written as a diagonal sum of the holonomy algebras of the factors in~\eqref{prod}. Let $\h$ be one of these holonomy algebras. Since $\h$ is a diagonal summand of $\hol(\nabla^\tau)$, it either acts irreducibly on $\R^8$, or trivially on some nonzero subspace.

If $\h$ acts irreducibly, it has to be either a Riemannian holonomy algebra or a compact simple Lie algebra acting by its adjoint representation. The first case is not possible since it would imply that $\tau=0$. In the second case, $\h\cong\su(3)$ for dimensional reasons; however, the image of $\su(3)$ under its adjoint representation does not lie in (a conjugate of) $\spin(7)$, since $\spin(7)$ annihilates a nontrivial four-form on $\R^8$, but $\su(3)$ does not (by Chevalley--Eilenberg theory or a computation with LiE \cite{lie}, $(\Lambda^4\su(3))^{\su(3)}=0$). Thus this case is impossible as well.

We conclude that $\h$ must act reducibly on $\R^8$. This implies that $\h$ is contained in some conjugate of $\g_2\subset\spin(7)$, since $\Spin(7)$ acts transitively on the unit sphere in $\R^8$ and the stabilizer of every vector is $\rmG_2$. Then we can apply the arguments in the proof of Theorem~\ref{thm:g2} to see that either  $\hol(\nabla^\tau)$ is conjugate to $\diag(\su(2),0,\ldots,0)$, or $\hol(\nabla^\tau)=0$. Due to $\tau\neq0$, this means that $(M,g,\tau)$ is locally the product of a K3 surface with a flat torsionful $S^3\times\R$, or $(M,g,\tau)$ is locally a non-abelian compact Lie group with flat Cartan--Schouten connection, so either $S^3\times\R^5$, $S^3\times S^3\times\R^2$ or $\SU(3)$.
\end{proof}

For the purpose of the next theorem, we define a \emph{$\Spin(k)$-structure} on an oriented Riemannian manifold $(M^n,g)$ as a reduction of the orthonormal frame bundle to $\Spin(k)$, where $\Spin(k)$ is embedded into $\SO(n)$ by its \emph{spin representation} $\Sigma_k$, $n=\dim\Sigma_k$. By the \emph{spin representation} we mean the representation obtained by restricting to $\Spin(k)\subset\Cl^0_k$ the unique real irreducible representation of the real even Clifford algebra $\Cl^0_k$ for $k\not\equiv0\mod4$, or respectively the sum of the two irreducible $\Cl^0_k$-representations for $k\equiv0\mod4$. This means that for $k=0\mod 4$ the spin representation is the sum of two so-called half-spin representations $\Sigma^\pm_k$, cf.~\cite[\S11]{harvey}. If $k=8p+r$ with $r\in\{0,\ldots,7\}$, then $\dim\Sigma_k=2^{4p+a_r}$, where $a_0=a_1=0,\ a_2=1,\ a_3=2$, and $a_r=3$ for $4\leq r\leq 7$.

\begin{rem}
\label{rem:spin}
    Due to exceptional isomorphisms, for small $k$ the spin representation is in fact the standard representation of low-dimensional classical groups:
    \begin{itemize}
        \item $\Sigma_3$ is the standard representation of $\Spin(3)\cong\SU(2)$ on $\C^2$;
        \item $\Sigma_4^\pm$ is the composition of the projection to one of the factors of $\Spin(4)\cong\SU(2)\times\SU(2)$ with the standard representation of $\SU(2)$ on $\C^2$.
        \item $\Sigma_5$ is the standard representation of $\Spin(5)\cong\Sp(2)$ on $\H^2$;
        \item $\Sigma_6$ is the standard representation of $\Spin(6)\cong\SU(4)$ on $\C^4$;
    \end{itemize}
\end{rem}

We will thus be interested in $\Spin(k)$-structures mostly for $k\geq 7$. PSCT geometries with parallelizing a $\Spin(7)$-structure were treated by Theorem~\ref{thm:spin7}; the next theorem shows that for $k\geq8$, PSCT $\Spin(k)$-structures with nontrivial torsion are necessarily flat.

\begin{thm}\label{thm:spink}
Let $(M^n,g,\tau)$ be a  PSCT geometry parallelizing a $\Spin(k)$-structure, where $k\geq8$. Then
\begin{enumerate}[\upshape(i)]
    \item either $\tau=0$, $k=9$, and $(M,g)$ is locally isometric to the Cayley projective plane $\Oct\P^2$ or its noncompact dual $\Oct\H^2$, or
    \item $\hol(\nabla^\tau)=0$ and $(M,g,\tau)$ is locally isomorphic to a compact Lie group with a left-invariant $\Spin(k)$-structure.
\end{enumerate}
\end{thm}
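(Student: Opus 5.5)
We follow the pattern of the proofs of Theorems~\ref{thm:parG}--\ref{thm:spin7}. By Corollary~\ref{corprod}, $(M,g,\tau)$ is locally a product as in \eqref{prod}. The holonomy algebra $\hol(\nabla^\tau)$ is then a diagonal sum of summands $\diag(\pi(\h),0,\ldots,0)$. Each summand is one of three kinds: the adjoint representation of a compact simple Lie algebra, the standard $\u(1)$ on $\R^2$ (the non-round Sasaki factors), or an irreducible Riemannian holonomy representation. Since $\nabla^\tau$ parallelizes the $\Spin(k)$-structure, $\hol(\nabla^\tau)\subset\spin(k)$, acting on $\R^n=\Sigma_k$. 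The key input is a description of the subalgebras $\h\subset\spin(k)$ with a nonzero fixed vector in $\Sigma_k$. By the known structure of spinor stabilizers, the stabilizer of a generic spinor is a proper subalgebra. The bulk of the work is to show that no summand of the three kinds can occur unless it is trivial, with one exceptional case.

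\textbf{Step 1: irreducible summands.} Suppose first that a single summand $\h$ acts irreducibly on all of $\Sigma_k$, so that $M$ has only one factor.
\begin{itemize}
\item If $\h$ is a Riemannian holonomy algebra, then $\tau=0$ and $(M,g)$ is irreducible with $\hol\subset\spin(k)$ acting on $\Sigma_k$. Berger's list, together with the symmetric spaces whose isotropy is spin-type, then leaves only $\spin(9)$ on $\Sigma_9=\R^{16}$. This gives $\Oct\P^2$ or $\Oct\H^2$, since $\Spin(9)$-holonomy forces a symmetric space. For $k\ge 8$, $k\neq9$, the dimensions $\dim\Sigma_k$ do not match any entry of Berger's list with $\hol\subset\spin(k)$; the $\Sigma_8^\pm$ case is the $\spin(7)$-type and is reducible.
\item If $\h$ is compact simple acting by its adjoint representation, then $\dim\h=\dim\Sigma_k$, which is a power of $2$. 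In addition, $\ad(\h)$ must lie in $\spin(k)$, so $\h$ must preserve the $\Spin(k)$-invariant forms on $\Sigma_k$. For $k\ge8$ I would exclude this by dimension and rank counts. A subgroup of $\SO(n)$ containing $\Spin(k)$ properly is big, whereas an adjoint-simple $\h$ equals its own stabilizer of $\sigma$. Hence $\ad(\h)\subset\spin(k)$ would force $\spin(k)$ to stabilize the canonical three-form, or to be contained in $\h$, and then the dimensions contradict one another.
\end{itemize}

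\textbf{Step 2: reducible summands.} Now suppose a summand $\h\neq0$ acts with a nonzero kernel on $\Sigma_k$. Then $\h$ lies in the stabilizer of some spinor. I would show that for $k\ge8$ this stabilizer contains no decomposable $2$-form, arguing as in Theorem~\ref{thm:g2}. Any $X\wedge Y\in\spin(k)\subset\so(\Sigma_k)$ would be a Clifford element $\sum a_{ij}e_ie_j$ of rank $2$. But the action of a nonzero element of $\spin(k)$ on $\Sigma_k$ has rank at least $\dim\Sigma_k/2$, since after conjugation it is a sum of commuting $\lambda_j e_{2j-1}e_{2j}$ with eigenvalues $\pm\lambda_1\pm\cdots$. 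This excludes the $\u(1)$ summands. The same rank argument handles adjoint summands and holonomy summands: $\pi(\h)$ has kernel of dimension $n-\dim V$. By the eigenvalue description, though, any nonzero element of $\spin(k)$ has a kernel of dimension at most $\dim\Sigma_k/2$ unless its eigenvalue pattern is degenerate. Analysing the possible patterns, I would show that a subalgebra acting nontrivially only on a subspace $V$ forces $\dim V\ge \dim\Sigma_k/2$, and then every element is degenerate. This yields a contradiction for $k\ge8$, for example by considering a maximal torus of $\h$ inside one of $\spin(k)$.

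\textbf{Conclusion and main obstacle.} Hence $\hol(\nabla^\tau)=0$ unless we are in case (i). A flat PSCT geometry is locally a Lie group $H$ with compact Lie algebra, carrying a $(-)$-connection, so $\nabla^\tau$-parallel objects are left-invariant, and the $\Spin(k)$-structure is left-invariant; this is case (ii). I expect Step 2 to be the main obstacle, namely proving cleanly that no nonzero subalgebra of $\spin(k)$, $k\ge8$, can act trivially on a large subspace of $\Sigma_k$ while being an adjoint or Riemannian holonomy representation on the complement. I would attack it through the weights of $\Sigma_k$, which are $\frac12(\pm e_1\pm\cdots\pm e_m)$. An element $t$ of a Cartan subalgebra annihilating more than half of these weights must vanish when $k\ge8$. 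One would then compare this bound with $\dim\ker\pi(t)$ for generic $t$ in each candidate $\h$.
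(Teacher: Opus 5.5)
Your skeleton is the right one: decompose $\hol(\nabla^\tau)$ via Corollary~\ref{corprod}, then show every diagonal summand is either $0$ or $\spin(9)$ acting on $\Sigma_9$. Your rank bound is also correct. The paper's $\Sigma_k$ contains both half-spin modules when $k\equiv0\bmod4$, so you can pair the weights $\frac12(\pm e_1\pm\cdots\pm e_m)$ by flipping the sign in front of some $e_j$ that does not vanish on the element. This shows every nonzero element of $\rho_*(\spin(k))$ has kernel of dimension at most $\frac12\dim\Sigma_k$, which does rule out the $\u(1)$-summands.

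But Step~2, which is the heart of the theorem, is not carried out, and the tool you propose for it is too weak. Take $k=9$ and a hypothetical summand $\ad(\g_2)$ on $\R^{14}\subset\R^{16}$, trivial on the complementary $\R^2$. It passes your test for every element: centralizers of nonzero elements of $\g_2$ have dimension at most $4$, so all kernels have dimension $\le 2+4=6<8$. It is excluded only by a branching argument: any $\g_2\subset\so(9)$ acts on $\R^9$ as $\R^7\oplus\R^2$, hence on $\Sigma_9$ as $2(\R\oplus\R^7)$. For general $k\geq8$ you would need to classify the subalgebras of $\rho_*(\spin(k))$ acting as an adjoint or holonomy representation on a subspace of $\Sigma_k$ and trivially on its complement. ``Analysing the possible patterns'' does not provide this.

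Step~1 has gaps too.
\begin{itemize}
\item The claim that Berger's list and the symmetric spaces leave only $\spin(9)$ is asserted rather than proved. The dimensions \emph{do} match for $k=10$: $\rmE_6/\Spin(10)\U(1)$ has dimension $32=\dim\Sigma_{10}$. It is excluded only because its $46$-dimensional holonomy does not fit into $\spin(10)$.
\item Your adjoint argument is a non sequitur: $\ad(\h)=\stab(\sigma)\subset\spin(k)$ gives no inclusion $\spin(k)\subset\stab(\sigma)$. That case is settled simply by noting that $\dim\Sigma_k=2^a$ with $a\geq4$, and no compact simple Lie algebra has such a dimension.
\end{itemize}

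The idea you are missing, which lets the paper avoid all case analysis, is that the three kinds of summands need not be treated separately. Each is itself a Riemannian holonomy representation: $\ad(\h)$ is the holonomy of a bi-invariant metric on a compact simple Lie group, and $\u(1)$ on $\R^2$ is that of a non-flat surface. Hence there is a simply connected Riemannian manifold $(M',g')$, a product of such pieces with the factors $N_j$ and a flat factor, with $\hol(\nabla^{g'})=\hol(\nabla^\tau)\subset\rho_*(\spin(k))$. Since $\Hol(\nabla^{g'})$ is connected, it lies in $\rho(\Spin(k))$, so $(M',g')$ carries a parallel even Clifford structure of rank $k$. The classification \cite[Thm.~2.14]{MS11} then says that $g'$ is flat or $M'$ is one of the symmetric spaces in \cite[Table~2]{MS11}. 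Among their holonomies, only $\Spin(9)\subset\SO(16)$ of $\Oct\P^2$, $\Oct\H^2$ lies in $\rho(\Spin(k))$. This settles your Steps~1 and~2 at once; without it, or an equivalent classification, the proposal does not reach the conclusion.
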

\begin{proof}
Let $\rho:\Spin(k)\to\SO(n)$ denote the spin representation. One last time we apply Corollary~\ref{corprod} to $(M,g,\tau)$ and note that $\hol(\nabla^\tau)\subset\rho_\ast(\spin(k))\subset\so(n)$ is a diagonal sum of the holonomy algebras of the factors in~\eqref{prod}. The holonomy representations of each factor is either the adjoint representation $(\ad,\h)$ of a compact simple Lie algebra, or the standard representation of $\u(1)$ on $\R^2$, or an irreducible Riemannian holonomy representation. In fact, we can understand all of them as Riemannian holonomy representations: $(\ad,\h)$ is the holonomy representation of a bi-invariant metric on a compact simple Lie group with Lie algebra $\h$, and $(\R^2,\u(1))$ is the holonomy representation of a non-flat surface.

This shows that there always exists a Riemannian manifold $(M',g')$ with same holonomy algebra as $\nabla^\tau$, and by assuming that $M'$ is simply connected we can arrange $\Hol(\nabla^{g'})$ to be connected. It follows then from $\hol(\nabla^{g'})=\hol(\nabla^\tau)\subset\rho_\ast(\spin(k))$ that $\Hol(\nabla^{g'})\subset\rho(\Spin(k))$. In particular the orthonormal frame bundle of $(M',g')$ admits a reduction to $\Spin(k)$ and one can define on $(M',g')$ a parallel \emph{even Clifford structure} of rank $k$ in the sense of \cite{MS11}. By \cite[Thm.~2.14]{MS11}, either $\nabla^{g'}$ is flat, or $k\in\{8,9,10,12,16\}$ and $\Hol(\nabla^{g'})$ is listed in \cite[Table~2]{MS11}.

In the first case, it follows that $\hol(\nabla^\tau)=0$, and then Corollary~\ref{corprod} implies together with Theorem~\ref{thm:irred} implies that $(M,g,\tau)$ is locally isomorphic to a compact Lie group with $\nabla^\tau$ the $(-)$-connection.

For the remaining possibilities, we observe that the only of the holonomy groups in \cite[Table~2]{MS11} which is contained in $\Spin(k)$ is the one of the Cayley projective plane (and its non-compact dual) for $k=9$. (Note that for $k=16$, the holonomy of $\rmE_8/\Spin^+(16))$ is $\Spin(16)/{\Z_2}$ acting on $\Sigma_{16}^+$, which is only half of the spin representation $\Sigma_{16}$ as defined above.) Accordingly, $\hol(\nabla^\tau)=\spin(9)\subset\so(16)$, which means that $(M,g)$ must itself be locally isometric to $\Oct\P^2$ or $\Oct\H^2$.
\end{proof}

\section{Uniqueness of the characteristic connection}
\label{sec:unique}

In order to justify talking about \emph{the torsion three-form} of a $G$-structure admitting a $G$-connection with skew-symmetric torsion (i.e.~a \emph{characteristic connection}), this connection should be unique. Recall from the introduction that this is known to be the case for almost Hermitian structures, almost contact metric structures, $\rmG_2$-structures, $\Spin(7)$-structures and $\Spin(9)$-structures.

In this section we will show that on a given Riemannian manifold $(M^n,g)$, a characteristic connection for $G=\U(n/2)$ or $G=\Sp(n/4)\Sp(1)$ is unique if it exists. As mentioned above, for $\U(n/2)$ the result is well known, but since the proof is very short, we give it here for convenience. The claim is a direct consequence of the following.

\begin{lem}
\label{lem:uniqueskew}
\begin{enumerate}[\upshape(i)]
\item If $(V^{2m},g,J)$ is a Hermitian vector space and $\tau\in\Lambda^3V$ satisfies $\tau_X\in \u(m)=\Lambda^{1,1}V$ for every $X\in V$, then $\tau=0$.
\item If $(V^{4q},g,J_1,J_2,J_3)$ is a quaternionic Hermitian vector space, $q\ge 2$, and $\tau\in\Lambda^3V$ satisfies $\tau_X\in \sp(q)\sp(1)$ for every $X\in V$, then $\tau=0$.
\end{enumerate}
\end{lem}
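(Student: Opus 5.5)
For (i), the plan is to use only the total skew-symmetry of $\tau$ together with the condition that each $\tau_X$ commutes with $J$. The condition $\tau_X\in\Lambda^{1,1}V$ means $\tau(X,JY,JZ)=\tau(X,Y,Z)$ for all $X,Y,Z$. Since $\tau$ is totally skew, the same invariance holds in every pair of slots. Applying it successively to the pairs $(2,3)$, $(1,3)$ and $(1,2)$ gives
\[
\tau(X,Y,Z)=\tau(X,JY,JZ)=\tau(JX,JY,J^2Z)=-\tau(JX,JY,JZ)=-\tau(J^2X,J^2Y,JZ)=-\tau(X,Y,JZ).
\]
Replacing $Z$ by $JZ$ in the relation $\tau(X,Y,Z)=-\tau(X,Y,JZ)$ yields $\tau(X,Y,JZ)=\tau(X,Y,Z)$. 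Combining the two relations gives $\tau(X,Y,Z)=-\tau(X,Y,Z)$, so $\tau=0$. In representation-theoretic terms, $\Lambda^3V\cap(V\otimes\Lambda^{1,1}V)=0$, and the computation above is just this fact made explicit.

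For (ii), I would decompose $\tau_X=A_X+\sum_a\theta_a(X)J_a$ with $A_X\in\sp(q)$. Here $\theta_a\in V^*$ is defined by $\theta_a(X)=\frac{1}{4q}\langle\tau_X,J_a\rangle$, up to normalization, so $\theta_a$ is essentially $J_a\intprod\tau$. The argument then has three steps.

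\textbf{Step 1.} Show that $\theta_a=0$. Contracting with $\omega_a$ means $\theta_a=c\,\omega_a\intprod\tau$. Using that $A_X$ commutes with every $J_b$ and is skew-symmetric, I would compute $\tau(X,Y,Z)$ with one slot twisted by $J_b$ and then trace. Concretely, first apply the argument of part (i) to the complex structure $J_1$: the component of $\tau_X$ anticommuting with $J_1$ is $\theta_2(X)J_2+\theta_3(X)J_3$. Projecting the identity $\tau(X,Y,Z)=\tau(Y,Z,X)$ onto the $J_1$-anti-invariant part of $\Lambda^3$ then gives a linear relation among the $\theta_a$. A cleaner alternative is to view $\tau$ as lying in $\Lambda^3V\cap(V\otimes(\sp(q)\oplus\sp(1)))$ and compute this intersection via $\Sp(q)\Sp(1)$-decompositions. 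Writing $V=E\otimes H$, we have
\[
\Lambda^3V=\Lambda^3_0E\,S^3H\oplus E\,S^3H\oplus K\,H\oplus E\,H,
\]
where $K$ is the irreducible $\Sp(q)$-module in $\Lambda^2_0E\otimes E$ complementary to $E$ and $\Lambda^3_0E$. On the other side,
\[
V\otimes\sp(q)=E H\otimes S^2E=(S^3E\oplus K\oplus E)H,\qquad
V\otimes\sp(1)=E H\otimes S^2H=E\,(S^3H\oplus H).
\]
The common constituents are therefore $EH$, $KH$ and $ES^3H$. One must then show that the relevant equivariant maps are not compatible, that is, that no nonzero element of $\Lambda^3V$ has its $\sp(q)^\perp\cap\sp(1)^\perp$ component vanishing. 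I would do this by checking each isotypic component on an explicit highest-weight vector, or with the trace argument described next.

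\textbf{Step 2.} Obtain $\theta_a=0$ by a trace argument. Fix $a$ and note that $\sum_i\tau(e_i,J_be_i,\cdot)$ equals $2\omega_b\intprod\tau$. Compute the same quantity by first using skew-symmetry to move the fixed vector $X$ into the first slot, where $\tau_X=A_X+\sum_c\theta_c(X)J_c$. Since $A_X$ commutes with $J_b$, we get $\tr(J_bA_X)=\langle A_X,J_b\rangle=0$, because $\sp(q)\perp\sp(1)$. Consequently $\omega_b\intprod\tau$ is proportional to $\theta_b$ with coefficient $4q$ coming from the first computation, while the direct cyclic computation gives a different coefficient. I expect this to force $(4q-c)\theta_b=0$ with $c\neq 4q$ when $q\geq2$, hence $\theta_b=0$.

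\textbf{Step 3.} Once $\theta_a=0$, every $\tau_X$ lies in $\sp(q)\subset\u(2q)$ for $J_1$, and part (i) gives $\tau=0$.

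The main obstacle is Step 1/Step 2: establishing that the $\sp(1)$-component of $\tau_X$ vanishes. The hypothesis $q\ge2$ must be used here, since for $q=1$ we have $\sp(1)\sp(1)=\so(4)$ and the statement fails. If the trace argument does not close, I would fall back on the representation-theoretic computation. The decisive point there is that in the $EH$ and $ES^3H$ components, the element of $\Lambda^3V$ is a specific combination, of the form $\theta\wedge\omega_a$-type terms, whose $\tau_X$ has a nonzero component orthogonal to $\sp(q)\oplus\sp(1)$ when $q\ge2$. I would verify this by evaluating on vectors $X,Y$ with $Y\notin\mathbb{H}X$, which exist only because $q\geq2$.
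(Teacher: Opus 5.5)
Part (i) is correct and is essentially the paper's argument: a chain of $J$-invariance and skew-symmetry moves ending in $\tau=-\tau$. Step 3 also correctly reduces (ii) to (i) once the $\sp(1)$-components $\theta_a$ vanish. The gap is that this vanishing is never established, and it is the whole content of (ii) and the only place where $q\ge2$ enters.

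In Step 2, the one computation you actually carry out moves $X$ to the first slot. It gives $\sum_i\tau(e_i,J_be_i,X)=-\tr(J_b\tau_X)=4q\,\theta_b(X)$, which is just the definition of $\theta_b$ and imposes no constraint. The ``direct computation with a different coefficient'' is never written down. If you do write it, expanding $\sum_i\tau_{e_i}J_be_i$ with the summation vector in the first slot, the $\sp(q)$-parts contribute $\sum_iA_{e_i}J_be_i$. That term is \emph{not} removed by $\sp(q)\perp\sp(1)$, since this orthogonality only gives $\tr(J_bA_X)=0$ for a fixed $X$. Nor does the outcome have the form $(4q-c)\theta_b=0$.

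The representation-theoretic fallback in Step 1 stops exactly where the work begins. After listing the common constituents $EH$, $KH$, $ES^3H$, asserting that the corresponding copies are incompatible is a restatement of the lemma, not a proof of it.

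Your trace idea can be completed with one more ingredient. Every $A_{e_i}$ commutes with $J_b$, and $\sum_i\tau_{e_i}e_i=0$. Hence $\sum_iA_{e_i}J_be_i=J_b\sum_iA_{e_i}e_i=-J_b\sum_cJ_c\theta_c$. Comparing the two computations then yields the coupled system
\[
2q\,\theta_b=-\sum_{c\neq b}J_bJ_c\,\theta_c,\qquad b=1,2,3 .
\]
Taking norms and summing over $b$ gives $2q\sum_b|\theta_b|\le 2\sum_b|\theta_b|$, so $\theta_b=0$ for $q\ge2$, and Step 3 finishes. For $q=1$ the system is solved by $\theta_b=J_b\xi$, consistent with $\so(4)=\sp(1)\oplus\sp(1)$.

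The paper argues differently. It writes $\tau_X=\sigma_X+\sum_i\alpha_i(X)\omega_i$ and runs the chain of part (i) for $J_1$, tracking the error terms coming from $\alpha_2\omega_2+\alpha_3\omega_3$. This gives $\tau=\alpha_2\wedge\omega_2+\alpha_3\wedge\omega_3$. Repeating for $J_2,J_3$ yields $\alpha_1\wedge\omega_1=\alpha_2\wedge\omega_2=\alpha_3\wedge\omega_3$. Finally, contracting with $\omega_1$ gives $|\alpha_i|^2=(2q-1)^6|\alpha_i|^2$, so the $\alpha_i$ vanish.
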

\begin{proof}
\begin{enumerate}[\upshape(i)]
\item For every $X,Y,Z\in V$ we have by assumption $\tau(X,Y,Z)=\tau(X,JY,JZ)$. Using the skew-symmetry of $\tau$ we obtain:
\begin{align*}
\tau(X,Y,Z)&=\tau(X,JY,JZ)=-\tau(JY,X,JZ)=\tau(JY,JX,Z)\\
&=-\tau(Z,JX,JY)=-\tau(Z,X,Y)=-\tau(X,Y,Z),
\end{align*}
thus showing that $\tau=0$.
\item Denoting by $\omega_i$ the Hermitian 2-form $g(J_i,\cdot,\cdot)$ for $i=1,2,3$, the hypothesis can be equivalently stated as follows: there exists $1$-forms $\alpha_1,\alpha_2,\alpha_3$ and $\sigma\in V^*\otimes\sp(q)$ such that 
\[\tau_X=\sigma_X+\sum_{i=1}^3\alpha_i(X)\omega_i,\qquad\forall X\in V.\]
The condition $\sigma_X\in\sp(q)$ means that $\sigma_X(J_iY,J_iZ)=\sigma_X(Y,Z)$ for all $Y,Z\in V$. Since $\omega_i(J_1Y,J_1Z)=-\omega_i(Y,Z)$ for $i=2,3$, we obtain:
\begin{align*}
\tau(X,J_1Y,J_1Z)&=\sigma_X(Y,Z)+\sum_{i=1}^3\alpha_i(X)\omega_i(J_1Y,J_1Z)\\
&=\tau(X,Y,Z)-2\sum_{i=2}^3\alpha_i(X)\omega_i(Y,Z)
\end{align*}
On the other hand, using this relation again but with different arguments yields:
\begin{align*}
\tau(X,J_1Y,J_1Z)&=- \tau(J_1Y,X,J_1Z)=\tau(J_1Y,J_1X,Z)-2\sum_{i=2}^3\alpha_i(J_1Y)\omega_i(J_1X,Z)\\
&=-\tau(Z,J_1X,J_1Y)-2\sum_{i=2}^3\alpha_i(J_1Y)\omega_i(J_1X,Z)\\
&=-\tau(Z,X,Y)+2\sum_{i=2}^3\alpha_i(Z)\omega_i(X,Y)-2\sum_{i=2}^3\alpha_i(J_1Y)\omega_i(J_1X,Z).
\end{align*}
Comparing the above two equations shows that
\begin{equation}\label{aj}
\tau(X,Y,Z)=\sum_{i=2}^3(\alpha_i(X)\omega_i(Y,Z)+\alpha_i(Z)\omega_i(X,Y)-\alpha_i(J_1Y)\omega_i(J_1X,Z))
\end{equation}
for every $X,Y,Z\in V$. For $X=Y$ this relation gives 
\[0=\alpha_2(X)J_2(X)+\alpha_3(X)J_3(X)+\alpha_2(J_1X)J_3(X)-\alpha_3(J_1X)J_2(X),\]
thus showing that $\alpha_3(J_1X)=\alpha_2(X)$ and $\alpha_2(J_1X)=-\alpha_3(X)$. Substituting in \eqref{aj} yields
\[\tau=\alpha_2\wedge\omega_2+\alpha_3\wedge\omega_3.\]
By replacing $J_1$ with $J_2$ and $J_3$ in the above argument, we obtain that $\alpha_1\wedge\omega_1=\alpha_2\wedge\omega_2=\alpha_3\wedge\omega_3$. 

We claim that this implies $\alpha_1=\alpha_2=\alpha_3=0$ for $q\ge 2$. Indeed, taking the Lefschetz operator associated to $J_1$ we get after straightforward calculations $J_1\intprod (\alpha_1\wedge\omega_1)=(2q-1)\alpha_1$ and $J_1\intprod (\alpha_2\wedge\omega_2)=-J_3\alpha_2$. In particular we have $g(\alpha_2,\alpha_2)=(2q-1)^2g(\alpha_1,\alpha_1)$. By circular permutation of the indices we also obtain $g(\alpha_3,\alpha_3)=(2q-1)^2g(\alpha_2,\alpha_2)$ and $g(\alpha_1,\alpha_1)=(2q-1)^2g(\alpha_3,\alpha_3)$, so $g(\alpha_i,\alpha_i)=(2q-1)^6g(\alpha_i,\alpha_i)$ for $i=1,2,3$, thus proving our claim since $(2q-1)>1$. We thus have shown that $\tau=0$ thanks to \eqref{aj}.
\end{enumerate}
\end{proof}

\begin{cor}
Let $G=\U(n/2)$ or $G=\Sp(n/4)\Sp(1)$. If a $G$-structure on a Riemannian manifold $(M^n,g)$ admits a characteristic connection, then this connection is unique.
\end{cor}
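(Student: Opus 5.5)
The plan is the standard difference-of-connections argument, reduced to Lemma~\ref{lem:uniqueskew}. Suppose $\nabla^{\tau}=\nabla^g+\tau$ and $\nabla^{\tau'}=\nabla^g+\tau'$ are two characteristic connections of the same $G$-structure, with $\tau,\tau'\in\Omega^3(M)$. Both are $G$-connections, i.e.~connections on the reduced principal $G$-bundle. Hence their difference
\[A:=\nabla^{\tau}-\nabla^{\tau'}=\tau-\tau'\]
is a $1$-form with values in the adjoint bundle of the reduction. Concretely, at each point $p$ and for every $X\in T_pM$, the endomorphism $A_X=(\tau-\tau')_X$ lies in $\g\subset\so(T_pM)$. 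Here $\g=\u(n/2)$ with respect to the Hermitian structure at $p$, or $\g=\sp(n/4)\oplus\sp(1)$ with respect to the quaternionic Hermitian structure at $p$.

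The key step is that $A$ is itself a $3$-form, being a difference of $3$-forms. Pointwise we therefore have a $3$-form $A_p\in\Lambda^3T_pM$ with $(A_p)_X\in\g$ for all $X\in T_pM$. Lemma~\ref{lem:uniqueskew}(i) in the unitary case and Lemma~\ref{lem:uniqueskew}(ii) in the quaternionic case then give $A_p=0$. Since $p$ is arbitrary, $\tau=\tau'$ and the two connections coincide.

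The only point requiring care is the dimension hypothesis. Lemma~\ref{lem:uniqueskew}(ii) requires $q=n/4\geq2$. For $n=4$ the group $\Sp(1)\Sp(1)=\SO(4)$ imposes no restriction, and indeed every metric connection with skew torsion is an $\SO(4)$-connection, so uniqueness genuinely fails. I would therefore state explicitly that the quaternionic case is understood for $n\geq8$, the usual convention for $\Sp(n/4)\Sp(1)$-structures. With that caveat, the argument is purely pointwise linear algebra and there is no real obstacle; the substantive work was already done in Lemma~\ref{lem:uniqueskew}.
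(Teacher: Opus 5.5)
Your argument is correct and is essentially the paper's proof: the difference of two characteristic connections lies in $\Omega^1(M,\g)\cap\Omega^3(M)$, which vanishes pointwise by Lemma~\ref{lem:uniqueskew}. Your caveat that the quaternionic case needs $n\geq8$ is also correct, since Lemma~\ref{lem:uniqueskew}(ii) assumes $q\geq2$, and uniqueness genuinely fails for $\Sp(1)\Sp(1)=\SO(4)$ (the paper leaves this assumption implicit).
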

\begin{proof}
The $G$-connections on $(M,g)$ form an affine space modeled on $\Omega^1(M,\g)$. Likewise, the affine space of metric connections with skew-symmetric torsion is modeled on $\Omega^3(M)$. Lemma~\ref{lem:uniqueskew} now shows that $\Omega^1(M,\g)\cap\Omega^3(M)=0$.
\end{proof}

\end{document}